\def\N{\mathbb N}
\def\X{\mathcal X}
\def\Y{\mathcal Y}
\def\G{\mathcal G}
\def\B{\mathcal B}
\def\K{\mathcal K}
\DeclareMathOperator{\Forb}{Forb}
\newtheorem{theorem}{Theorem}[section]
\newtheorem{definition}[theorem]{Definition}
\newtheorem{lemma}[theorem]{Lemma}
\newtheorem{corollary}[theorem]{Corollary}
\newtheorem{question}[theorem]{Question}
\title{Universal Graphs for the Topological Minor Relation}
\author{Thilo Krill}
\address{Universit\"at Hamburg, Department of Mathematics, Bundesstrasse 55 (Geomatikum), 20146 Hamburg, Germany}
\email{thilo.krill@uni-hamburg.de}
\keywords{infinite graphs; universal graphs; planar graphs; subdivided star; topological minor}
\begin{document}

\begin{abstract}
A subgraph-universal graph/a topological minor-universal graph in a class of graphs $\mathcal{G}$ is a graph in $\mathcal{G}$ which contains every graph in $\mathcal{G}$ as a subgraph/topological minor. We prove that the class $\mathcal{P}$ of all countable planar graphs does not contain a topological minor-universal graph. This answers a question of Diestel and Kühn and strengthens a result of Pach stating that there is no subgraph-universal graph in $\mathcal{P}$. Furthermore, we characterise for which subdivided stars $T$ there is a topological minor-universal graph in the class of all countable $T$-free graphs.
\end{abstract}

\maketitle

\section{Introduction}\label{introduction}

Let $R$ be either the subgraph relation, the topological minor relation, or the minor relation. We say that a graph $\Gamma$ is \emph{$R$-universal} in a class of graphs $\G$ if $\Gamma\in\G$ and $\Gamma$ contains every graph $G\in\G$ with respect to $R$. In this paper, we investigate the existence of topological minor-universal graphs. All graphs here are countable and we do not distinguish between isomorphic graphs.

Ulam asked whether there exists a subgraph-universal planar graph (cf.\ \cite{pach}), which was answered negatively by Pach:

\begin{theorem}[\cite{pach}]\label{planar-leq}
There is no subgraph-universal graph in the class of all planar graphs.
\end{theorem}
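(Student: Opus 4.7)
The plan is to exhibit a family of countable planar graphs too rich to all fit simultaneously as subgraphs of one countable planar graph. For each function $f\colon\N\to\N$ I would construct a planar graph $G_f$ whose local structure along a distinguished ray encodes the values $f(0),f(1),\dots$. A concrete candidate is a ``decorated ray'' $v_0v_1v_2\dots$ at whose $i$-th vertex $v_i$ one attaches a small gadget of size parameter $f(i)$ (for instance a cycle of length $f(i)+3$ through $v_i$, a pendant star $K_{1,f(i)}$, or a pendant path of length $f(i)$). Each such $G_f$ is planar, and the family $\{G_f:f\in\N^\N\}$ has size $2^{\aleph_0}$.

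Assume for contradiction that some countable planar graph $\Gamma$ contains every $G_f$ as a subgraph, fix a planar embedding of $\Gamma$, and fix subgraph embeddings $\phi_f\colon G_f\hookrightarrow\Gamma$ for each $f$. Since $V(\Gamma)$ is countable, there are only countably many choices for any finite initial tuple $(\phi_f(v_0),\dots,\phi_f(v_n))$, so a K\"onig-type tree-compactness argument (or direct Ramsey-style extraction) produces an uncountable $\mathcal F\subseteq\N^\N$ for which the embeddings $\phi_f$, $f\in\mathcal F$, all agree on the whole ray $v_0v_1\dots$, mapping it to a common ray $R^{\ast}$ in $\Gamma$.

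On this uncountable coherent subfamily the gadget attached at $v_i$ in $G_f$ must, for every $f\in\mathcal F$, be embedded around the same spine vertex of $\Gamma$. The key step is then to exploit planarity of $\Gamma$ at these common vertices: picking $f\neq g$ in $\mathcal F$ with $f(i)\neq g(i)$ at some index $i$, one would argue that two inequivalent gadget attachments at a common vertex, placed in the rotation system forced by the planar embedding of $\Gamma$, must either overlap (violating injectivity of some $\phi_f$) or together span a topological copy of $K_5$ or $K_{3,3}$, contradicting planarity of $\Gamma$.

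The principal obstacle is precisely this last step. Naive gadgets such as plain pendant paths do not suffice, since a single planar graph can easily house pendant paths of all lengths at a fixed vertex. One would need gadgets that are \emph{rigid} enough in their planar embedding (e.g.\ cycles through the spine vertex, or gadgets with several distinguished attachment points) to pin down a definite slice of the rotation around the shared vertex, and the extraction of $\mathcal F$ may have to be refined so that not only the spine but also the cyclic order of gadgets at each spine vertex is coherent. I expect the heart of the proof to lie in designing the right gadget and performing a careful rotation-system analysis to extract an explicit Kuratowski subdivision from two incompatible coherent embeddings.
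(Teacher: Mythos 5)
Your statement is one the paper only cites (it is Pach's theorem); the paper never reproves it, so I judge your sketch on its own terms and against the closest analogue in the paper, the proof of Theorem \ref{negative}. As it stands the sketch has two genuine gaps. First, the extraction step fails: from the fact that for each $n$ there are only countably many tuples $(\phi_f(v_0),\dots,\phi_f(v_n))$ you get, for each fixed $n$, an uncountable subfamily agreeing up to $v_n$, but no K\"onig/compactness argument yields an uncountable (or even a two-element) subfamily whose embeddings agree on the \emph{whole} infinite ray. The map $f\mapsto(\phi_f(v_i))_{i\in\N}$ goes into $V(\Gamma)^{\N}$, a set of size continuum, so it may well be injective. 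This is exactly why Pach's proof, and the proof of Theorem \ref{negative} in this paper, only use the pigeonhole to force two embeddings to agree at a \emph{single} root vertex, and then prove a structural rigidity statement (the analogue of Claim 3, which exploits the forbidden substructures of $\Gamma$ itself) showing that agreement at the root propagates to the surrounding structure. That rigidity argument is the mathematical content of the theorem, and it is absent from your sketch.

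Second, the cardinality of the family $\{G_f\}$ carries no weight by itself: a countable graph can contain continuum many pairwise non-isomorphic subgraphs, so everything rests on the gadget design, and the gadgets you name demonstrably cannot work. A single countable planar graph can have, attached at every vertex of one ray, pendant paths of all lengths, pendant stars of all sizes, and cycles of all lengths simultaneously (the result is still planar), so two embeddings realising $f(i)\neq g(i)$ at a common spine vertex need not overlap and need not create any $K^5$ or $K_{3,3}$ subdivision; a pendant star does not even constrain the rotation at the spine vertex. You acknowledge this ("the principal obstacle is precisely this last step"), but that acknowledgement means the proof is missing precisely where planarity must be used. For comparison, the correct arguments encode the uncountably many graphs not by gadget \emph{sizes} but by structures (e.g.\ the graphs $H_1,H_2$ glued along a tree in Theorem \ref{negative}, or Pach's configurations bounded by the fact that three vertices of a planar graph have at most two common neighbours) whose presence or absence between two fixed vertices of $\Gamma$ is detectable and mutually exclusive inside any graph of the class; designing such a detector is the heart of the proof and remains to be done in your approach.
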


There have been different approaches on how to weaken Ulam's question in order to obtain a positive answer. For example, Huynh, Mohar, \v{S}ámal, Thomassen and Wood recently constructed a graph which contains all planar graphs as subgraphs and is not planar itself, but satisfies a number of properties which are also satisfied by planar graphs \cite{huynh}. For another approach, Diestel and Kühn showed that there is a minor-universal graph in the class of all planar graphs and asked whether the same is true for topological minor-universality \cites{diestelkuhn}. We answer their question negatively, strengthening Theorem \ref{planar-leq}:

\begin{theorem}\label{planar-unlhd}
There is no topological minor-universal graph in the class of all planar graphs.
\end{theorem}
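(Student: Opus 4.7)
We argue by contradiction. Suppose $\Gamma \in \P$ is topological minor-universal, and construct a planar graph $G^\ast$ that fails to be a topological minor of $\Gamma$; the construction diagonalises over the countably many infinite-degree vertices of $\Gamma$.

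First, if $G$ is a topological minor of $\Gamma$ with branch map $\phi \colon V(G) \to V(\Gamma)$, then $\phi$ is injective and $\deg_\Gamma(\phi(u)) \geq \deg_G(u)$ for every $u \in V(G)$. In particular, every vertex of infinite degree in $G$ must be sent to a vertex of infinite degree in $\Gamma$. Since $\Gamma$ is countable, its set $V_\infty(\Gamma)$ of infinite-degree vertices is at most countable; enumerate it as $v_1, v_2, \ldots$ (if $V_\infty(\Gamma)$ is finite, it suffices to take a planar $G^\ast$ with more infinite-degree vertices than $|V_\infty(\Gamma)|$, directly contradicting injectivity of the branch map).

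The plan is to take $G^\ast$ to consist of a planar backbone --- for concreteness, a one-way infinite path $u_1 u_2 u_3 \ldots$ --- together with a planar subdivided-star gadget $H_i$ attached at $u_i$ giving $u_i$ countably infinite degree. Any topological embedding of $G^\ast$ into $\Gamma$ then forces $\phi(u_i) = v_{\sigma(i)}$ for some injection $\sigma \colon \N \to \N$, with the rays of $H_i$ realised as internally disjoint paths in $\Gamma$ emanating from $v_{\sigma(i)}$ and avoiding both the backbone-paths and the other $H_j$'s. The ray-length pattern of $H_i$ will be chosen, in terms of $\Gamma$, so as to defeat $v_j$ as a potential image of $u_i$ for every $j \leq i$, and hence no injection $\sigma$ can succeed.

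The crucial obstacle is engineering the gadgets so that (i) they really defeat every $v_j$, by exploiting the fact that a planar graph supports only a restricted combinatorial invariant at a fixed infinite-degree vertex --- captured by the cyclic order of incident edges in a plane embedding of $\Gamma$ together with the lengths of internally disjoint paths emanating from that vertex --- and (ii) the resulting $G^\ast$ is itself planar, which is achieved by drawing the backbone as a path in the plane and placing each subdivided star $H_i$ into a distinct face. The diagonalisation then proceeds by listing pairs $(i,j)$ with $j \leq i$ and, at each step, selecting a ray length for $H_i$ incompatible with any length realisable in $\Gamma$ among the internally disjoint rays from $v_j$, producing a planar witness against universality of $\Gamma$. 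The main difficulty lies in making ``incompatible'' precise: the invariant attached to each $v_j$ must be simultaneously rich enough to be encoded by a single ray-length pattern of a subdivided star and weak enough to yield a planar $G^\ast$ after all gadgets are added.
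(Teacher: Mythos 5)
There is a genuine gap: the entire content of the proof is deferred to an unspecified ``invariant'' at the infinite-degree vertices of $\Gamma$, and you concede yourself that making ``incompatible'' precise is the main difficulty. As it stands, the diagonalisation cannot work in the form you describe. Under a topological embedding, a ray of length $\ell$ in your gadget $H_i$ only needs to map to a path of length \emph{at least} $\ell$ emanating from $\phi(u_i)$; lengths are bounded below, never above. So if $\Gamma$ has a vertex $v$ with infinitely many internally disjoint arbitrarily long paths (or rays) starting at $v$ --- which a candidate universal planar graph certainly can have, e.g.\ by attaching infinitely many disjoint rays to a vertex --- then \emph{every} countable subdivided-star gadget embeds topologically at $v$, no matter what ray-length pattern you choose. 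Hence no choice of lengths ``defeats'' such a vertex, and step (i) of your plan fails; planarity does not restrict the local structure at a single vertex in the way you need (recall that there even \emph{is} a minor-universal planar graph, so the obstruction cannot be a purely local one). The preliminary observations (infinite-degree vertices must map to infinite-degree vertices, injectivity of the branch map) are correct but carry none of the load.

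For comparison, the paper does not attack $\Gamma$ vertex-by-vertex at all. It proves a general transfer theorem: for any class $\G=\Forb(\X;\unlhd)$ with $\X$ finite graphs of minimum degree at least $3$, a $\unlhd$-universal graph $\Gamma$ in $\G$ yields a $\leq$-universal graph $\Gamma^*$ in $\G$, obtained by keeping $V(\Gamma)$ and joining $v$ to $w$ exactly when $\Gamma$ contains infinitely many independent $v$--$w$ paths; universality of $\Gamma^*$ is verified by replacing each edge of a given $G\in\G$ by infinitely many independent paths of length $2$ and topologically embedding the result in $\Gamma$. Applying this with $\X=\{K^5,K_{3,3}\}$ and invoking Pach's theorem that no $\leq$-universal planar graph exists gives Theorem \ref{planar-unlhd}. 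Any correct proof along your lines would have to supply a global argument of comparable strength (as in Pach's original proof or Lehner's independent argument), not a per-vertex length obstruction.
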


Lehner proved Theorem \ref{planar-unlhd} independently in \cite{lehner}. He showed that every graph $G$ which contains every planar graph as a topological minor contains arbitrarily large finite cliques as topological minors as well as an infinite clique as a minor. This implies Theorem \ref{planar-unlhd} as such a graph $G$ clearly cannot be planar. At the same time, Lehner's result strengthens a recent result by Huynh et al., who showed the same under the stronger premise that $G$ contains all planar graphs as subgraphs \cite{huynh}.

We generalise Theorem \ref{planar-unlhd} in a different direction than Lehner:

\begin{theorem}\label{excludingtopminors_informal}
Any class of graphs which is defined by excluding finite topological minors of minimum degree at least 3 contains a topological minor-universal graph if and only if it contains a subgraph-universal graph.
\end{theorem}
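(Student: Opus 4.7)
The easy direction, from subgraph-universal to topological minor-universal, is immediate: a graph which contains every $G\in\G$ as a subgraph also contains every such $G$ as a topological minor, so a subgraph-universal graph is automatically topological minor-universal.

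For the nontrivial direction, assume $\Gamma\in\G$ is topological minor-universal, and write $\G=\Forb_\preceq(\mathcal F)$ where every $F\in\mathcal F$ is finite with $\delta(F)\ge 3$. The goal is to manufacture a subgraph-universal $\Gamma^*\in\G$. The foundational observation is that $\G$ is closed under both subdividing an edge and suppressing a degree-$2$ vertex. Indeed, since every branch vertex of a subdivision of $F\in\mathcal F$ has degree at least $3$, no degree-$2$ vertex of a host graph can serve as a branch vertex; hence these local operations preserve the class of topological minors. In particular, for every $G\in\G$ the subdivision $T_G\subseteq\Gamma$ witnessing $G\preceq\Gamma$ itself lies in $\G$.

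The plan is to short-circuit the subdivision paths inside $\Gamma$ so as to realise each $G$ as a literal subgraph. Concretely, for every $G\in\G$ fix an embedding $\phi_G\colon V(G)\hookrightarrow V(\Gamma)$ together with internally disjoint $\phi_G(u)$--$\phi_G(v)$ paths $P^G_{uv}$ in $\Gamma$ realising the edges $uv\in E(G)$. Let $\Gamma^*$ be obtained from $\Gamma$ by adjoining the chord edges $\phi_G(u)\phi_G(v)$ for every $G\in\G$ and every $uv\in E(G)$; by construction $\phi_G$ then embeds $G$ into $\Gamma^*$ as a subgraph. It remains to verify $\Gamma^*\in\G$. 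One argues by contradiction: given a hypothetical subdivision of some $F\in\mathcal F$ inside $\Gamma^*$, each chord edge it uses is parallel to a path of $\Gamma$, and substituting these parallel paths should return a subdivision of $F$ sitting inside $\Gamma$ itself, contradicting $\Gamma\in\G$.

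The main obstacle is exactly this final step: the internal vertices of the paths $P^G_{uv}$ may collide with branch vertices or with internal vertices of other paths used by the hypothetical subdivision of $F$, and the naive uncontraction can then fail to land inside $\Gamma$. Overcoming this requires choosing the witnesses $\phi_G$ and $P^G_{uv}$ with sufficient mutual disjointness; a natural device is to first replace $\Gamma$ by a rich enough subdivision of itself (still in $\G$ by subdivision-closure), so that each $G\in\G$ can be assigned private subdivision paths whose internal vertices are not reused. A further subtlety is that $\mathcal F$ may contain disconnected forbidden graphs, which blocks any appeal to closure of $\G$ under disjoint union; in that case the uncontraction argument has to be run simultaneously on all connected components of the putative subdivision of $F$, and this bookkeeping is the technical heart of the proof.
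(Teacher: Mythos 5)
Your easy direction is fine, and reducing the problem to constructing a graph $\Gamma^*\in\G$ in which every $G\in\G$ embeds as a subgraph is the right frame, but the crucial step --- verifying $\Gamma^*\in\G$ --- is exactly where your argument stops, and the fixes you sketch do not close the gap. First, within a single $G$ your construction supplies only \emph{one} path $P^G_{uv}$ parallel to the chord $\phi_G(u)\phi_G(v)$; a subdivision of some $X\in\X$ inside $\Gamma^*$ may use that chord together with inner vertices of $P^G_{uv}$ itself (or of the other paths of the same system), and then there is nothing left to reroute along --- no amount of disjointness between the path systems of \emph{different} graphs $G$ helps with this. Second, the device of first passing to a ``rich enough subdivision'' of $\Gamma$ cannot deliver what you ask of it: subdividing edges creates no new independent paths between prescribed vertices, it only lengthens existing ones, and you give no argument that one can simultaneously choose, for all countably many $G\in\G$, witnessing path systems that are pairwise internally disjoint; that is a substantial unproved claim, not bookkeeping. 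Third, the worry about disconnected members of $\X$ is a red herring: finiteness of the forbidden graphs is all that is ever needed.

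What is missing is a mechanism making the short-circuiting robust, and this is the paper's key idea. There one keeps $V(\Gamma^*)=V(\Gamma)$ and declares $vw\in E(\Gamma^*)$ if and only if $\Gamma$ contains infinitely many independent $v$--$w$ paths. Then $\Gamma^*\in\G$ follows greedily: given a topological embedding of a finite $X\in\X$ in $\Gamma^*$, one realises the edges of $X$ one at a time by paths in $\Gamma$, each time avoiding the finitely many vertices already used, which is possible precisely because infinitely many independent paths are available for every edge of $\Gamma^*$. Universality is then proved not by embedding $G$ directly but by embedding the blow-up $G'$ of $G$ in which every edge is replaced by infinitely many independent paths of length $2$: the minimum-degree-$3$ hypothesis on $\X$ guarantees $G'\in\G$ (suppressing the new degree-$2$ vertices in any would-be forbidden subdivision removes only subdividing vertices), and a topological embedding of $G'$ in $\Gamma$ forces each pair $\gamma(v),\gamma(w)$ with $vw\in E(G)$ to be joined by infinitely many independent paths in $\Gamma$, i.e.\ to be adjacent in $\Gamma^*$. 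Your proposal has no analogue of this ``infinitely many parallel witnesses'' mechanism, and without one the uncontraction step in your plan genuinely fails rather than being a matter of careful bookkeeping.
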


As an application, we strengthen several known results on the non-existence of subgraph-universal graphs.
In particular, Theorem \ref{planar-unlhd} is a direct consequence of Theorem \ref{planar-leq} and Theorem \ref{excludingtopminors_informal} since the countable planar graphs are precisely the countable graphs without $K_{3,3}$ and $K^5$ as topological minors.

In Sections \ref{section-pos} and \ref{section-neg}, we look at topological minor-universal graphs with forbidden subdivided stars. For a graph $X$, let $\Forb(X)$ be the class of all graphs which do not contain $X$ as a subgraph. To decide for which graphs $X$ there is an induced subgraph-universal graph or a subgraph-universal graph in $\Forb(X)$ is an ongoing quest which has been solved for many but not for all finite connected graphs $X$ (see for example \cites{furedikomjath, cherlinshelah, cherlintallgren, cherlinshelah2016}).

The question whether there exists a topological minor-universal graph in $\Forb(X)$ will be solved in this paper for all finite connected graphs $X$. However, it is only interesting if $X$ is a subdivided star for the following reasons. Firstly, it is trivial to find a topological minor-universal graph in $\Forb(X)$ if $X$ is not a subdivided star: If $X$ contains a cycle, say of length $k$, then the infinite clique in which every edge is subdivided $k$ times is topological minor-universal in $\Forb(X)$. Otherwise $X$ is a tree but not a subdivided star and thus $X$ contains two vertices of degree at least three. When we denote their distance in $X$ by $k$, the same graph as above is topological minor-universal in $\Forb(X)$.

Secondly, the existence of a topological minor-universal graph $\Gamma$ in a class of graphs $\G$ is especially interesting if $\G$ is closed under topological minors because then $\Gamma$ yields a characterisation for the graphs in $\G$: Any graph $G$ is contained in $\G$ if and only if $G$ is a topological minor of $\Gamma$. Finally, note that $\Forb(X)$ is indeed closed under topological minors for any subdivided star $X$ (in fact, excluding a subdivided star as a subgraph or as a topological minor results in the same). On the other hand, $\Forb(X)$ is not closed under topological minors for any other finite connected graphs $X$ since the universal graph in $\Forb(X)$ which we found above contains $X$ as a topological minor but $X\notin\Forb(X)$.

We characterise the existence of topological minor-universal graphs with forbidden subdivided stars as follows:

\begin{theorem}\label{forb-star}
Let $T$ be a finite subdivided star. There is a topological minor-universal graph in $\Forb(T)$ if and only if at most two of the original edges of $T$ are subdivided.
\end{theorem}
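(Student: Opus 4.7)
The plan is to prove the two directions of the biconditional separately. Parametrise $T$ by writing it as a star centred at $c$ with $n$ edges, of which $k$ are subdivided into paths of lengths $\ell_1,\ldots,\ell_k\ge 2$ while the remaining $n-k$ remain single edges to leaves. The two directions are of different flavour: sufficiency is a construction problem, while necessity is a combinatorial obstruction problem.

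\emph{Sufficiency ($k\le 2$).} Here I would build topological minor-universal graphs explicitly, by case analysis on $k$. When $k=0$, $T=K_{1,n}$ and $\Forb(T)$ is the class of graphs of maximum degree at most $n-1$; standard constructions (for instance a suitable countable regular tree, or a recursively defined branching graph) yield a subgraph-universal and hence topological minor-universal object. For $k\in\{1,2\}$ the plan is to assemble $\Gamma$ from a bounded-degree ``trunk'' decorated by controlled attached paths and pendants. The structural intuition is that with at most two subdivided legs, each $G\in\Forb(T)$ admits a decomposition into a caterpillar-like backbone along which all potentially $T$-creating configurations live; such decompositions can then be amalgamated into a single countable universal graph.

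\emph{Necessity ($k\ge 3$).} Suppose for contradiction that $\Gamma\in\Forb(T)$ is topological minor-universal. I would exhibit a family $(G_\alpha)$ of $T$-free graphs whose simultaneous presence as topological minors in any graph forces that graph to contain $T$. Natural candidates are generalised subdivided brooms, or rooted binary-branching trees of unbounded depth decorated with $n-k$ pendants at each node: every vertex carries at most two sufficiently long branches, keeping $G_\alpha\in\Forb(T)$, but the family as a whole grows unboundedly in depth and width. A compactness / König-type argument applied to the branch sets of topological embeddings into $\Gamma$ should then locate a vertex of $\Gamma$ admitting three internally disjoint long branches together with the required $n-k$ additional adjacent neighbours, giving $T$ as a topological minor of $\Gamma$ and contradicting $\Gamma\in\Forb(T)$.

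The main obstacle is the necessity direction: the family $(G_\alpha)$ must be designed delicately enough that each individual member stays $T$-free, while the pigeonhole over embedded branches in any common container manufactures three sufficiently long internally disjoint paths \emph{and} the $n-k$ un-subdivided leaves simultaneously at a single vertex of $\Gamma$. Calibrating the branch lengths against $\ell_1,\ldots,\ell_k$ and blocking re-routings that would sidestep the obstruction is what makes the argument subtle.
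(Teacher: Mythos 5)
Your proposal has genuine gaps in both directions. For sufficiency, the $k=0$ case is already wrong as stated: if $T=S_n$ with $n\ge 4$, the class $\Forb(T)$ of graphs of maximum degree at most $n-1$ contains \emph{no} subgraph-universal graph (this follows from Theorem \ref{forb-tree}, since a star on at least four leaves is neither a path nor a path with an adjoined edge), and a countable regular tree cannot even be topological minor-universal because it contains no cycle as a topological minor. The paper instead builds a genuinely topological-minor-universal (not subgraph-universal) bounded-degree graph by an explicit construction from disjoint rays with extra vertices attached according to a surjective-with-infinite-fibres pattern (Lemma \ref{sk-free}), and the embedding there is degree-preserving and colour-preserving --- properties that are essential later. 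For $k\in\{1,2\}$ your ``caterpillar-like backbone'' intuition points in roughly the right direction, but you give no mechanism: the paper needs a precise decomposition of every $T$-free graph into a core $G^*$ of maximum degree less than the number of legs together with attached pieces $G_v$ containing no long path (Lemma \ref{decomposition}), the Cherlin--Tallgren universality result for coloured classes forbidding all colourings of a path (Lemma \ref{pn-free}), and a carefully chosen family of forbidden coloured configurations to guarantee that gluing the universal pieces onto the universal core does not itself create a copy of $T$. None of this amalgamation or $T$-freeness verification is present in your sketch.

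The necessity direction is where the essential idea is missing. A topological-minor-universal $\Gamma\in\Forb(T)$ only needs to contain each $G_\alpha$ as a topological minor, so a family of trees decorated with pendants is not an obstruction: subdividing edges destroys exactly the adjacencies (the $n-k$ un-subdivided leaves at the centre) that your pigeonhole is supposed to manufacture, and no compactness or K\H{o}nig-type argument on branch sets can force actual edges at a single vertex of $\Gamma$; it is entirely plausible that some $T$-free graph contains all of your candidate trees as topological minors. The paper's proof hinges on two ideas you do not have. First, rigidity gadgets: the graphs $G_\alpha$ are built from a regular tree by replacing edges with either $H_1$ (two vertices joined by infinitely many independent paths of length $p_m$) or $H_2$ (a clique-like graph), engineered so that \emph{every proper subdivision of $G_\alpha$ already contains $T$} (Claim 2); hence any topological embedding of $G_\alpha$ into a $T$-free $\Gamma$ must in fact be a subgraph embedding, collapsing the weak relation to the strong one. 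Second, cardinality plus rigidity: the family is indexed by an uncountable set of $\{1,2\}$-sequences, so two distinct members must be embedded with the same root vertex in the countable $\Gamma$, and a structural analysis of how copies of $H_1$ and $H_2$ can sit inside $\Gamma$ (Claim 3) shows the two embeddings must agree level by level, whence the first coordinate where the sequences differ yields a copy of $T$ in $\Gamma$. Without an analogue of the subdivision-rigidity step and the uncountable-family pigeonhole, your plan cannot rule out a universal graph, so the necessity argument as proposed would fail.
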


A comparison to the following result by Cherlin and Shelah shows that there are fewer positive results if we look at induced subgraph- or subgraph-universal graphs, in particular the condition on the minimum degree in Theorem \ref{excludingtopminors_informal} cannot be omitted:

\begin{theorem}[\cite{cherlinshelah}]\label{forb-tree}
Let $T$ be a finite tree. There is an induced subgraph-universal graph in $\Forb(T)$ if and only if there is a subgraph-universal graph in $\Forb(T)$ if and only if $T$ is either a path or consists of a path with an adjoined edge.
\end{theorem}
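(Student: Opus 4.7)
The plan is to establish the equivalences by proving three implications. The implication from induced-subgraph-universality to subgraph-universality is immediate, since an induced copy is in particular a copy. It therefore suffices to handle (A) the construction of an induced subgraph-universal graph in $\Forb(T)$ when $T$ is a path or a path with an adjoined edge, and (B) the non-existence of even a subgraph-universal graph in $\Forb(T)$ otherwise.

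For (A) I would proceed by explicit construction, splitting into two cases. When $T=P_n$, every connected graph in $\Forb(T)$ has diameter at most $n-2$, so each $G\in\Forb(P_n)$ admits a rooted encoding of depth at most $n-2$. I would build a universal such rooted object by Fra\"{\i}ss\'e-style amalgamation, realising at every vertex and every depth each admissible finite "neighbourhood type" infinitely often; the resulting graph $\Gamma_n$ lies in $\Forb(P_n)$ and contains every countable $G\in\Forb(P_n)$ as an induced subgraph. When $T$ is a path with an adjoined pendant edge, a closer structural analysis shows that any sufficiently long induced path in a $T$-free graph has only one permitted kind of "side attachment"; an analogous but more involved amalgamation then yields the required universal graph.

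For (B) I would use a diagonalisation in the spirit of Pach's proof of Theorem \ref{planar-leq}. If $T$ is neither a path nor a near-path, then $T$ has either two vertices of degree at least three, or a single vertex of degree at least four, or a single branch vertex all three of whose pendant paths have length at least two. In each case, given any candidate $\Gamma\in\Forb(T)$, I would construct a family $\{G_i:i\in\N\}$ of $T$-free graphs such that, in any simultaneous embedding of all the $G_i$ into $\Gamma$, two of the images are forced to share a vertex of sufficiently high local degree whose shared neighbourhoods in $\Gamma$ already contain the pendant structure of $T$; that shared vertex would then serve as the branch vertex of a copy of $T$ inside $\Gamma$, contradicting $\Gamma\in\Forb(T)$.

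The hardest step will be (B): the family $\{G_i\}$ must be rich enough that the collision under any embedding is unavoidable, yet each $G_i$ must itself be $T$-free, so the local branching patterns allowed in the $G_i$ must be carefully tuned to the combinatorial shape of $T$. This is precisely what the scarcity of branching in paths and near-paths prevents, which accounts for the threshold in the theorem; Cherlin and Shelah's original proof packages this obstruction model-theoretically through an analysis of the age of $\Forb(T)$, but a direct combinatorial version along the above lines should also go through.
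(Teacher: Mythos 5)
First, note that the paper does not prove this statement at all: Theorem \ref{forb-tree} is quoted from Cherlin and Shelah \cite{cherlinshelah} purely for comparison with Theorem \ref{forb-star}, so there is no in-paper proof to measure your proposal against. You are in effect offering to reprove a substantial external theorem, and judged on its own terms your outline has real gaps in both halves.

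In part (A), the step for $T=P_n$ does not go through as described. Bounded diameter is a very weak consequence of $P_n$-freeness (a clique has diameter $1$ but contains arbitrarily long paths), so a ``rooted encoding of depth at most $n-2$'' is not available from that observation. More importantly, $\Forb(P_n;\leq)$ is not an amalgamation class under free amalgamation: gluing two $P_n$-free graphs over a common induced subgraph can create a long path through the overlap, so the Fra\"{\i}ss\'e-style construction you invoke does not apply off the shelf. The actual constructions (Komj\'ath--Mekler--Pach for paths, and the near-path case) require genuinely new ideas that your sketch defers to ``a closer structural analysis'' and ``a more involved amalgamation''; this is where the content of the positive direction lives, and it is absent. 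In part (B), your trichotomy of trees that are neither paths nor near-paths is correct, but the family $\{G_i : i\in\N\}$ is never constructed, and a countable family cannot force a collision by cardinality alone: with countably many graphs embedded into a countable $\Gamma$ there is no pigeonhole reason two images must share a vertex. The analogous argument in this paper (the proof of Theorem \ref{negative}) uses an \emph{uncountable} family $\{G_\alpha : \alpha\in A\}$ precisely so that two embeddings must agree somewhere, and then a delicate gadget analysis (the graphs $H_1$, $H_2$ and Claim 3) to turn that agreement into a forbidden copy of $T$. Your proposal names the right obstruction in spirit but supplies neither the family nor the mechanism, so as it stands it is a plan rather than a proof.
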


For further research on topological minor-universal graphs, we suggest the following question:

\begin{question}
For which finite connected graphs $X$ is there a topological minor-universal graph in the class of all graphs without $X$ as a topological minor?
\end{question}

Both Theorem \ref{excludingtopminors_informal} and Theorem \ref{forb-star} provide a partial answer.

\section{Preliminaries}\label{section-preliminaries}

We repeat that all graphs in this paper are countable but from now on we do distinguish between isomorphic graphs. Let $G$ and $H$ be graphs. An \emph{embedding of $G$ in $H$} is an injective map $\gamma:V(G)\to V(H)$ that preserves adjacency. A \emph{topological embedding of $G$ in $H$} is an injective map $\gamma:V(G)\to V(H)$ such that for every edge $vw\in E(G)$ there exists a $\gamma(v)$--$\gamma(w)$ path $P^{vw}$ in $H$ with the following property: For all $e\in E(G)$, the path $P^e$ has no inner vertices in the image of $\gamma$ or in any other path $P^f$ with $e\neq f\in E(G)$. If there exists an embedding of $G$ in $H$, we write $G\leq H$ and if there exists a topological embedding of $G$ in $H$, we write $G\unlhd H$. If $\gamma:v\mapsto v$ is an embedding of $G$ in $H$ (i.e. if $G$ is a subgraph of $H$), we write $G\subseteq H$. We have $G\leq H$ if and only if $H$ contains a subgraph isomorphic to $G$ and $G\unlhd H$ if and only there is a subdivision of $G$ which is isomorphic to a subgraph of $H$. The graph $H$ is a \emph{model} of $G$ if there is a partition $\{V_z:z\in V(G)\}$ of $V(H)$ into non-empty connected sets such that for all $y\neq z\in V(G)$ there is a $V_y$--$V_z$ edge in $H$ if and only if $yz\in E(G)$. We call the sets $V_z$ \emph{branch sets}. We say that $G$ is a \emph{minor} of $H$ and write $G\preccurlyeq H$ if $H$ has a subgraph which is a model of $G$.

Let $\G$ be a class of graphs and $\sqsubseteq$ a graph relation, e.g. $\sqsubseteq\in\{\leq,\unlhd,\preccurlyeq\}$. We say that a graph $\Gamma$ is \emph{$\sqsubseteq$-universal in $\G$} if $\Gamma\in\G$ and $G\sqsubseteq\Gamma$ for all $G\in\G$. We denote by $\Forb(\G;\sqsubseteq)$ the class of all graphs $H$ such that there is no graph $G\in\G$ with $G\sqsubseteq H$. If $\G=\{G_1,\ldots,G_n\}$ is finite, we also write $\Forb(G_1,\ldots,G_n;\sqsubseteq)$ for $\Forb(\G;\sqsubseteq)$. The graphs in $\Forb(\G;\leq)$ will be called \emph{$\G$-free}.

Let $G$ be a graph and $v\in V(G)$. We write $S_G(v)$ for the subgraph of $G$ with vertex set $\{v\}\cup N_G(v)$ which contains precisely all edges between $v$ and its neighbours. We write $P_n$ for a fixed path of length $n$ and $C_n$ for a fixed cycle of length $n$. Next, let $P$ be a nontrivial path in $G$ and $X$ a subset of $V(G)$. We say that $P$ is an \emph{$X$-path} if $X$ contains both endvertices of $P$ but no inner vertices of $P$.

An \emph{$\alpha$-colouring} of a graph $G$ is a map $c:V(G)\to\alpha$, where $\alpha$ is an ordinal (we will always have either $\alpha=2$ or $\alpha=\omega$). We will not explicitly mention the map $c$ and refer to $c(v)$ as the \emph{colour of $v$ in $G$}. If we consider a graph $G$ together with an $\alpha$-colouring of $G$, we say that $G$ is an \emph{$\alpha$-graph}. Let $G$ and $H$ be $\alpha$-graphs. A \emph{(topological) $\alpha$-embedding of $G$ in $H$} is a (topological) embedding of $G$ in $H$ which additionally preserves the colour of vertices. We write $G\leq_\alpha H$ if there is an $\alpha$-embedding of $G$ in $H$ and $G\unlhd_\alpha H$ if there is a topological $\alpha$-embedding of $G$ in $H$. We will also use the notions $\Forb(\X;\sqsubseteq)$ and $\sqsubseteq$-universality for $\sqsubseteq\in\{\leq_\alpha,\unlhd_\alpha\}$; all graphs that are involved in their definitions have to be $\alpha$-graphs.

\section{Forbidden Topological Minors and Forbidden  Minors}\label{section-topological}

\begin{theorem}\label{excludingtopminors}
If $\X$ is a class of finite graphs with minimum degree at least 3, then $\G:=\Forb(\X;\unlhd)$ contains a $\unlhd$-universal graph if and only if $\G$ contains a $\leq$-universal graph.
\end{theorem}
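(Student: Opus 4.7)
The $(\Leftarrow)$ direction is immediate, since an $\leq$-embedding is also an $\unlhd$-embedding.

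For the $(\Rightarrow)$ direction, let $\Gamma\in\G$ be $\unlhd$-universal. I first observe that $\G$ is closed under subdivision: if $G\in\G$ and $G'$ is a subdivision of $G$, then any topological embedding of some $X\in\X$ into $G'$ would place the branch vertices of $X$ (which have degree $\geq 3$ in $X$, hence in $G'$) into $V(G)\subseteq V(G')$, as the subdivision vertices of $G'$ have degree $2$; contracting the subdivision paths would then yield $X\unlhd G$, contradicting $G\in\G$.

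I would then define $\Gamma^+$ by adjoining to $\Gamma$ a shortcut edge $uv$ for each pair of distinct non-adjacent vertices $u,v\in V(\Gamma)$ that are joined in $\Gamma$ by a path whose internal vertices all have degree $2$ in $\Gamma$. The verification that $\Gamma^+\in\G$ proceeds by taking a hypothetical topological embedding of some $X\in\X$ into $\Gamma^+$ and replacing each used shortcut by its corresponding degree-$2$ chain in $\Gamma$. The resulting paths in $\Gamma$ remain internally disjoint because (i) distinct shortcuts arise from vertex-disjoint maximal degree-$2$ chains in $\Gamma$, and (ii) branch vertices, having degree at least $3$ in $\Gamma^+$, cannot be degree-$2$ vertices of $\Gamma$ and so do not conflict with chain-internal vertices. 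A brief case analysis using the simplicity of $X$ (two edges of $X$ cannot share both endpoints) rules out any remaining collisions, yielding $X\unlhd\Gamma$ and a contradiction.

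The main technical challenge is the $\leq$-universality of $\Gamma^+$. For $G\in\G$ with a topological embedding $\gamma_G\colon V(G)\to V(\Gamma)$ and paths $P^{vw}$, one would like to embed $G$ as a subgraph of $\Gamma^+$ via $v\mapsto\gamma_G(v)$; this works provided each path $P^{vw}$ in $\Gamma$ has only degree-$2$ internal vertices, so that the edge $\gamma_G(v)\gamma_G(w)$ is available as a shortcut in $\Gamma^+$. This is not automatic from an arbitrary topological embedding. The heart of the proof consists in exploiting the subdivision-closure of $\G$---perhaps by first replacing $\Gamma$ by a sufficiently subdivided $\unlhd$-universal variant in which more internal vertices have degree $2$---so that the embeddings of every $G\in\G$ can be arranged to use paths whose internal vertices are all degree-$2$ in the host. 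Coordinating this choice uniformly over all $G\in\G$ is the principal obstacle.
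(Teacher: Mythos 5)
There is a genuine gap, and you have in fact named it yourself: the $\leq$-universality of your shortcut graph $\Gamma^+$ is exactly the point where the proof has to happen, and the route you sketch for closing it does not work. The criterion ``joined by a path whose internal vertices all have degree $2$ in $\Gamma$'' is not robust: a $\unlhd$-universal graph need not have any degree-$2$ vertices at all. Indeed, if $\Gamma$ is $\unlhd$-universal in $\G$, attach a pendant leaf to every degree-$2$ vertex; since every $X\in\X$ has minimum degree $3$, a subdivision of $X$ has minimum degree $2$ and cannot use leaves, so the resulting graph is still in $\G$ and still $\unlhd$-universal, but it admits no shortcuts whatsoever, so for it $\Gamma^+=\Gamma$ (up to leaves) and $\leq$-universality of $\Gamma^+$ would already require what we are trying to prove. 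Subdividing $\Gamma$ further does not help either: subdivision does not prevent the connecting paths of a topological embedding from running through high-degree vertices of the host. Moreover, your verification that $\Gamma^+\in\G$ is also flawed as stated: a vertex of degree $2$ in $\Gamma$ can acquire large degree in $\Gamma^+$ through the added shortcuts (think of a long induced path of degree-$2$ vertices, where many overlapping shortcuts appear), so branch vertices of an embedding in $\Gamma^+$ can very well be degree-$2$ vertices of $\Gamma$, and distinct shortcuts need not come from internally disjoint chains.

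The paper's proof uses a different and, crucially, symmetric pair of ideas. The auxiliary graph $\Gamma^*$ has the same vertex set as $\Gamma$ and an edge $vw$ whenever $\Gamma$ contains \emph{infinitely many independent} $v$--$w$ paths; since each $X\in\X$ is finite, any topological embedding of $X$ in $\Gamma^*$ can be realised greedily inside $\Gamma$ (each required path only has to avoid finitely many vertices), so $\Gamma^*\in\G$. For universality one does not manipulate $\Gamma$ at all; instead one blows up $G$: let $G'$ be obtained from $G$ by replacing every edge with infinitely many internally disjoint paths of length $2$. Because every $X\in\X$ has minimum degree $3$, suppressing the new subdivision vertices shows $G'\in\G$, so $G'\unlhd\Gamma$, and the restriction of that topological embedding to $V(G)$ is then automatically a subgraph embedding of $G$ into $\Gamma^*$, since each edge of $G$ yields infinitely many independent paths between the images of its ends. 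This ``blow up $G$, not $\Gamma$'' step, together with the infinite-connectivity criterion for the new edges, is the missing idea that your degree-$2$-chain construction cannot replace.
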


\begin{proof}
Clearly, a $\leq$-universal graph in $\G$ is also $\unlhd$-universal. Conversely, suppose that there is a $\unlhd$-universal graph $\Gamma$ in $\G$. We construct a $\leq$-universal graph $\Gamma^*$ in $\G$ as follows. Let $V(\Gamma^*):=V(\Gamma)$ and
$$E(\Gamma^*):=\{vw : \text{there are infinitely many independent }v\text{-}w\text{ paths in }\Gamma\}.$$
We need to show that $\Gamma^*\in\G$ and that $G\leq\Gamma^*$ for every graph $G\in\G$.

First, suppose that $\Gamma^*$ is not a member of $\G$ and thus there is a graph $X\in\X$ and a topological embedding $\gamma$ of $X$ in $\Gamma^*$. Then $\gamma$ is also a topological embedding of $X$ in $\Gamma$, contradicting that $\Gamma\in\G$: We recursively find for every edge $vw\in E(X)$ a $\gamma(v)$--$\gamma(w)$ path $P^{vw}$ in $\Gamma$ whose inner vertices avoid $V(X)$ and all paths found in earlier steps. This can be done because there are infinitely many independent $\gamma(v)$--$\gamma(w)$ paths in $\Gamma$ and the inner vertices of $P^{vw}$ only need to avoid finitely many vertices.

Next, we prove that $G\leq\Gamma^*$ for every graph $G\in\G$. Let $G'$ be the graph obtained from $G$ by replacing every edge $e$ with infinitely many independent paths of length 2, whose inner vertices we call $v_0^e,v_1^e,v_2^e,\ldots$. We start by showing that $G'\in\G$. Suppose that $G'$ has a subgraph that is isomorphic to a subdivision $X'$ of a graph $X\in\X$, without loss of generality we assume that $X'\subseteq G'$. By suppressing every vertex of the form $v_i^e$ in $X'$ (reobtaining the edge $e$), we obtain the graph $X''\subseteq G$. Also $X''$ is a subdivision of $X$, since the minimum degree of $X$ is at least 3 and hence we only suppressed subdividing vertices. This contradicts $G\in\G$ and thus we have $G'\in\G$.

Since $\Gamma$ is $\unlhd$-universal in $\G$, there is a topological embedding $\gamma$ of $G'$ in $\Gamma$. Then $\gamma|_{V(G)}$ is an embedding of $G$ in $\Gamma^*$. Indeed, for any edge $vw\in G$ there are infinitely many independent $v$--$w$ paths in $G'$. Therefore, there are infinitely many independent $\gamma(v)$--$\gamma(w)$ paths in $\Gamma$ and thus $\gamma(v)\gamma(w)\in E(\Gamma^*)$.
\end{proof}

The statement of Theorem \ref{excludingtopminors} remains true when we exclude minors instead of topological minors:

\begin{corollary}\label{excludingminors}
If $\X$ is a class of finite graphs with minimum degree at least 3, then $\Forb(\X;\preccurlyeq)$ contains a $\unlhd$-universal graph if and only if it contains a $\leq$-universal graph.
\end{corollary}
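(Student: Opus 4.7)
The plan is to mimic the construction from Theorem~\ref{excludingtopminors}. Given a $\unlhd$-universal graph $\Gamma$ in $\G:=\Forb(\X;\preccurlyeq)$, I define $\Gamma^*$ on $V(\Gamma)$ by joining $v$ to $w$ whenever there are infinitely many independent $v$-$w$ paths in $\Gamma$, and aim to show that $\Gamma^*\in\G$ and that $G\le\Gamma^*$ for every $G\in\G$.

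To see $\Gamma^*\in\G$, suppose for contradiction that some $X\in\X$ is a minor of $\Gamma^*$. Since $X$ is finite, I may take finite branch sets $V_v$ ($v\in V(X)$) equipped with a finite spanning tree in each $\Gamma^*[V_v]$. Each edge of these spanning trees, and each of the finitely many $V_v$-$V_w$ edges of $\Gamma^*$ realising edges of $X$, comes from infinitely many independent paths in $\Gamma$. Processing these edges one at a time, I replace each spanning-tree edge by such a path in $\Gamma$ whose inner vertices avoid all previous choices and get absorbed into the relevant $V_v$; for each $V_v$-$V_w$ edge I likewise select an internally disjoint path and absorb all but its last inner vertex. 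The enlarged branch sets form a minor model of $X$ in $\Gamma$, contradicting $\Gamma\in\G$.

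For $G\leq\Gamma^*$, let $G'$ be obtained from $G$ by replacing every edge $e$ by infinitely many independent paths of length $2$ with inner vertices $v_i^e$, exactly as in Theorem~\ref{excludingtopminors}. Given a topological embedding $\gamma$ of $G'$ in $\Gamma$, the restriction $\gamma|_{V(G)}$ is an embedding of $G$ into $\Gamma^*$ by the argument there, so it only remains to verify $G'\in\G$. Assume $X\preccurlyeq G'$ via branch sets $\{V_v\}$, and set $V_v':=V_v\cap V(G)$; I claim $\{V_v'\}$ is a minor model of $X$ in $G$. Every edge of $G'$ joins $V(G)$ to some $v_i^e$ whose only neighbours are the endpoints of $e$, so every path in $G'[V_v]$ alternates between these two types; replacing each subdivision vertex by the adjacent edge $e\in E(G)$ shows $G[V_v']$ is connected. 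If $V_v$ were a single subdivision vertex $\{v_i^e\}$, then $\deg_X(v)\leq 2$, contradicting $\delta(X)\geq 3$, so each $V_v'$ is nonempty. Finally, any $V_v$-$V_w$ edge may (after swapping $v,w$) be assumed to have the form $xv_i^e$ with $x\in V_v'$ and $v_i^e\in V_w$; the other endpoint $y$ of $e$ must lie in $V_w$ as well (else $V_w=\{v_i^e\}$ and we reach the same contradiction), giving $e=xy$ as a $V_v'$-$V_w'$ edge in $G$. Thus $X\preccurlyeq G$, contradicting $G\in\G$.

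The main obstacle is this last step: the suppression trick from Theorem~\ref{excludingtopminors}, which simply undoes subdivisions in a topological embedding, has no direct analogue for minor embeddings. One must instead argue that a minor model of $X$ in $G'$ restricts to a minor model in $G$, and this restriction relies essentially on $\delta(X)\geq 3$ to rule out branch sets consisting solely of subdivision vertices and to force the required $V_v'$-$V_w'$ edges in $G$.
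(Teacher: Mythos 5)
Your argument is correct, but it takes a genuinely different route from the paper. The paper does not redo the $\Gamma^*$ construction at all: it defines $\Y$ to be the class of all \emph{finite models} of graphs in $\X$ that have minimum degree at least $3$, proves $\Forb(\X;\preccurlyeq)=\Forb(\Y;\unlhd)$ (the nontrivial inclusion is obtained by taking a well-chosen model $X'\subseteq G$ of some $X\in\X$ -- no degree-$1$ vertices, branch sets inducing finite trees, at most one edge between branch sets -- and suppressing its degree-$2$ vertices to get a graph $Y\in\Y$ with $Y\unlhd G$), and then quotes Theorem~\ref{excludingtopminors} applied to $\Y$ as a black box. You instead run the $\Gamma^*$ construction directly inside $\G=\Forb(\X;\preccurlyeq)$ and re-verify its two key properties with minor models: that a minor model of $X\in\X$ in $\Gamma^*$ can be converted into one in $\Gamma$ by choosing, for each of the finitely many relevant edges of $\Gamma^*$, one of the infinitely many independent paths avoiding the finitely many vertices already used and absorbing its interior into a branch set; and that $G'\in\G$, which you prove by restricting a minor model of $X$ in $G'$ to $V(G)$, using $\delta(X)\geq 3$ to rule out branch sets consisting of a single subdivision vertex -- this replaces the paper's ``suppress the subdividing vertices'' step, which indeed has no direct analogue for minors, exactly as you observe. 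Your approach buys a self-contained proof that does not need the auxiliary class $\Y$ or the equality of the two forbidden classes, at the cost of repeating (in minor-model language) the verification work that the paper gets for free from Theorem~\ref{excludingtopminors}; the paper's reduction is shorter but hides the model-surgery in the proof of $\Forb(\X;\preccurlyeq)\supseteq\Forb(\Y;\unlhd)$. One small point to tidy up: when you reroute a $V_v$--$V_w$ edge of $\Gamma^*$ through a path in $\Gamma$, say explicitly where the last inner vertex goes (either absorb the whole interior into $V_v$, or all but the last inner vertex into $V_v$ and that last vertex into $V_w$); as written, a vertex left in no branch set would destroy the required $V_v$--$V_w$ edge of the model.
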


\begin{proof}
Let $\Y$ be the class of all models of graphs in $\X$ that are finite and have minimum degree at least 3. We will show that $\Forb(\X;\preccurlyeq)=\Forb(\Y;\unlhd)$, then the claim follows from Theorem \ref{excludingtopminors}. $\Forb(\X;\preccurlyeq)\subseteq\Forb(\Y;\unlhd)$ is clear. For the proof of the converse inclusion, let $G$ be a graph that is not in $\Forb(\X;\preccurlyeq)$. Thus there is a subgraph $X'$ of $G$ which is a model of some $X \in \X$. We choose $X'$ so that it has no vertices of degree 1, the branch sets induce finite trees in $X'$, and there is at most one edge connecting two distinct branch sets. By suppressing all vertices of degree 2 in $X'$, we obtain a topological minor $Y$ of $X'$ with minimum degree at least 3. Since also the minimum degree of $X$ is at least 3, every branch set in $X'$ contains a vertex of degree at least 3 which was not suppressed in the construction of $Y$. Therefore, $X$ is a minor of $Y$. Additionally, $Y$ contains no loops or double edges because every cycle in $X'$ contains at least 3 vertices of degree at least 3 in $X'$, one for each branch set it traverses. Thus, $Y\in\Y$. Since $Y\unlhd X'\subseteq G$, it follows that $G$ is not contained in $\Forb(\Y;\unlhd)$.
\end{proof}

\begin{corollary}
The following classes of graphs do not contain $\unlhd$-universal graphs:
\begin{itemize}
    \item[(1)] $\Forb(K^5,K_{3,3};\unlhd)$ (the planar graphs)
    \item[(2)] $\Forb(K^n;\unlhd)$ for $n\geq 5$
    \item[(3)] $\Forb(K_{n,m};\unlhd)$ for $n,m\geq 3$
    \item[(4)] $\Forb(K^n;\preccurlyeq)$ for $n\geq 5$
\end{itemize}
\end{corollary}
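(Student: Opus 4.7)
My plan is to handle all four parts uniformly as applications of the two reduction results established earlier in this section, followed by an invocation of known non-existence results for $\leq$-universal graphs. The first step in each case is to check the minimum-degree-at-least-$3$ hypothesis. This is straightforward: $K^n$ has minimum degree $n-1$, which is at least $4$ for $n\geq 5$; $K_{3,3}$ has minimum degree $3$; and $K_{n,m}$ with $n,m\geq 3$ has minimum degree $\min\{n,m\}\geq 3$. Hence Theorem \ref{excludingtopminors} applies to the classes in (1)-(3) and Corollary \ref{excludingminors} applies to the class in (4), and in each case the existence of a $\unlhd$-universal graph in the class is equivalent to the existence of a $\leq$-universal graph in the same class.

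With that equivalence in hand, the corollary reduces to four non-existence statements about subgraph-universal graphs. Part (1) is immediate from Pach's Theorem \ref{planar-leq}. For parts (2), (3) and (4), I would cite the corresponding non-existence results for $\leq$-universal graphs from the prior literature on subgraph-universality; the introduction explicitly advertises the present corollary as a strengthening of such results, so the $\leq$-versions are treated as background input rather than things that need to be reproved here.

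There is no real obstacle remaining once Theorem \ref{excludingtopminors} and Corollary \ref{excludingminors} are available: the work has been done in those two statements, and the corollary is essentially a bookkeeping exercise listing classes where (i) the forbidden graphs have minimum degree at least $3$, so that the reductions apply, and (ii) the non-existence of a $\leq$-universal graph is already on record. The only mild care needed is to route each part through the correct reduction theorem, using Theorem \ref{excludingtopminors} for (1)-(3) (where topological minors are forbidden) and Corollary \ref{excludingminors} for (4) (where minors are forbidden).
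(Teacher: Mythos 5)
Your proposal is correct and follows essentially the same route as the paper: verify the minimum-degree hypothesis, apply Theorem \ref{excludingtopminors} to (1)--(3) and Corollary \ref{excludingminors} to (4), and then invoke the known non-existence of $\leq$-universal graphs (Pach for the planar case; the paper cites Diestel--Halin--Vogler for (2) and (4) and Diestel for (3), which are the references your ``prior literature'' step refers to).
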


\begin{proof}
The claim follows from Theorem \ref{excludingtopminors}, Corollary \ref{excludingminors} and the fact that none of these classes contain a $\leq$-universal graph: Pach \cite{pach} showed that there is no $\leq$-universal planar graph, Diestel, Halin and Vogler \cite{diestelhalinvogler} showed the same for the classes in (2) and (4) and Diestel \cite{diestel} for the classes in (3).
\end{proof}

\section{Forbidden Subdivided Stars -- Positive Results}\label{section-pos}

In this section we prove Theorem \ref{positive}, which states that there is a $\unlhd$-universal $T$-free graph if $T$ is a subdivided star in which at most 2 of the original edges are subdivided. We use the following notation for subdivided stars:

\begin{definition}\label{subdividedstar}
Let $k\in\N$ and $p_1\leq\ldots\leq p_k\in\N$ and let $P^1,\ldots,P^k$ be disjoint paths such that the length of $P^i$ is $p_i$ for all $i\leq k$.

We write $T(p_1,\ldots,p_k)$ for the graph obtained from the paths $P^1,\ldots,P^k$ by adding a vertex $c$ and identifying one endvertex of each path $P^i$ with $c$. We call $c$ the centre of $T(p_1,\ldots,p_k)$. If $p_1=\ldots=p_k=1$, then we also write $S_k$ for $T(p_1,\ldots,p_k)$.
\end{definition}

\begin{theorem}\label{positive}
Let $k\in\N$ and $p_1\leq\ldots\leq p_k\in\N$. If $k\leq 2$ or if $k\geq 3$ and $p_{k-2}=1$, then there exists a $\unlhd$-universal graph in $\Forb(T(p_1,\ldots,p_k);\leq)$.
\end{theorem}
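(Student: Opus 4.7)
The plan is to handle the case $k\le 2$ separately, and to treat the main case $k\ge 3$ with $p_{k-2}=1$ by an explicit construction.

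When $k\le 2$ the graph $T$ is a path $P_m$ with $m=p_1+\cdots+p_k$. Any subdivision of $P_m$ is again a (longer) path, so $\Forb(T;\le)=\Forb(T;\unlhd)$ and it suffices to exhibit a $\le$-universal graph in $\Forb(T;\le)$. This is precisely the base case of the Cherlin--Shelah dichotomy recalled in Theorem~\ref{forb-tree}, so we obtain the desired $\Gamma$ for free.

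For the main case set $\ell:=k-2$, $p:=p_{k-1}$, $q:=p_k$, so $T$ is a centre $c$ with $\ell$ leaves attached, together with two further attached paths of lengths $p$ and $q$. The structural reformulation of $T$-freeness that I will use is: $G\in\Forb(T;\le)$ if and only if no vertex $v\in V(G)$ admits both $\ell$ distinct neighbors and two further internally disjoint paths in $G$ of lengths $p$ and $q$ starting at $v$ and avoiding those $\ell$ neighbors internally. I would build $\Gamma$ by gluing countably many \emph{blocks} --- each one a finite $T$-free graph drawn from an enumeration of all finite $T$-free graphs --- along a skeletal tree $S$ whose edges are subdivided into paths of length strictly greater than $p+q$. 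The $T$-freeness of $\Gamma$ would then follow from two observations: no block alone contains $T$ (by construction), and no hypothetical centre of a copy of $T$ using vertices from distinct blocks can accommodate both required long legs, since any leg leaving the block of its centre must run along a skeletal subdivision whose length already exceeds $p+q$.

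For $\unlhd$-universality, given $G\in\Forb(T;\le)$ with an enumeration $V(G)=\{v_0,v_1,\ldots\}$, I would construct an embedding $\gamma$ inductively: at step $n$, place $v_n$ inside a fresh unused block of $\Gamma$ whose internal structure matches the local structure of $v_n$ in the induced subgraph on $\{v_0,\ldots,v_n\}$, and route each edge $v_nv_i$ to an already-embedded neighbor $\gamma(v_i)$ through the skeleton, using the length of the skeletal subdivisions to detour around previously used internal vertices. The topological rather than subgraph nature of the embedding is crucial, since paths in $\Gamma$ may be made arbitrarily long by such detours. The main obstacle is to coordinate the blocks, the skeleton, and the induction so that $\Gamma$ is $T$-free while still rich enough to host every $T$-free $G$; the hypothesis $p_{k-2}=1$ is precisely what makes this balance possible, since only two legs of $T$ are long --- a block may therefore contain arbitrarily high-degree vertices as long as no single vertex carries both long internal paths, whereas with three or more long legs the high-degree vertices of $\Gamma$ needed to host the high-degree vertices of $G$ would unavoidably carry three long paths and force $T\le\Gamma$.
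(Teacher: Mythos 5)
Your reduction of the case $k\le 2$ to universality for forbidden paths is fine, but the main case is only a plan, and its one concrete claim --- the $T$-freeness of the glued graph --- does not hold for the construction you describe. A leg of $T$ is just a path of prescribed length $p_{k-1}$ or $p_k$ issuing from the centre; it does not need to reach another block, so subdividing the skeletal edges into paths of length greater than $p+q$ does not obstruct such a leg at all --- on the contrary, a long skeletal path \emph{supplies} it. Since your blocks range over all finite $T$-free graphs, some block has a vertex $u$ with at least $k-1$ neighbours inside the block (finite $T$-free graphs with high-degree vertices exist in abundance when $p_{k-1}\ge 2$, e.g.\ large stars); if that block is attached to the skeleton at or near $u$, then $u$ together with its in-block neighbours and two disjoint initial segments of skeletal subdivisions yields a copy of $T$ in $\Gamma$. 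So $T$-freeness requires controlling, for each block, which vertices may be glued to long paths and with what residual degree --- and this is exactly the part of the argument you defer ("the main obstacle is to coordinate the blocks, the skeleton, and the induction"). The paper resolves it by a structural decomposition of $T$-free graphs (every block of $G$ containing a vertex of degree at least $k$ has bounded path length, hence $G$ splits into a core $G^*$ of maximum degree $k-1$ and attached parts $G_v$ with no long paths), by building a degree-preserving $\unlhd$-universal graph $\Gamma^*$ of maximum degree $k-1$, and by attaching at each $v\in V(\Gamma^*)$ a Cherlin--Tallgren-type universal graph for $2$-coloured graphs forbidding long paths \emph{and} forbidding every configuration that would complete to $T$ once a path of length $p_k$ is appended at the gluing vertex; moreover the attachment is chosen so that every skeleton vertex has total degree exactly $k-1$ in $\Gamma$, which is what actually excludes centres of $T$ on the skeleton.

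There are further gaps on the universality side. Your blocks are finite, but $T$-free graphs contain infinite pieces that must be hosted locally (an infinite star is $T$-free whenever $p_{k-1}\ge 2$), so finite blocks cannot absorb them; and if you instead route the infinitely many edges at a vertex through the skeleton, the image vertex needs infinite degree in $\Gamma$ and immediately acquires many long disjoint paths, recreating the $T$-freeness problem. Also, routing edges "through the skeleton with detours" must produce internally disjoint paths for infinitely many edges simultaneously; this needs an explicit structure (the paper uses a family of disjoint rays, one ray reserved per edge, with attachment vertices of degree less than $k$), plus a colouring/bookkeeping device to ensure that the attached universal piece sitting at $\gamma(v)$ matches both the residual degree and the isomorphism type needed for $G_v$. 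None of these mechanisms are present in the proposal, and your closing heuristic ("a block may contain arbitrarily high-degree vertices as long as no single vertex carries both long internal paths") is not sufficient: the danger comes precisely from one long path inside a block combining with one long path outside it, which your length-$(p+q)$ subdivisions do nothing to prevent.
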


For the rest of this section, we write $T$ for the graph $T(p_1,\ldots,p_k)$ from Theorem \ref{positive}. For paths $T$, Komjáth, Mekler and Pach \cite{komjathmeklerpach} showed that there exists a $\leq$-universal graph in $\Forb(T;\leq)$, which is also $\unlhd$-universal. In the following, we will therefore assume that $k\geq 3$ and $p_{k-2}=1$.

We begin by proving the existence of a $\unlhd$-universal $S_k$-free graph or, equivalently, a $\unlhd$-universal graph of maximum degree $k-1$. (In Lemma \ref{sk-free}, we prove that there is a $\unlhd$-universal connected $S_k$-free graph. By taking countably many disjoint copies of that universal graph, we obtain a $\unlhd$-universal graph for all $S_k$-free graphs that are not necessarily connected.)

\begin{lemma}\label{sk-free}
There is a $\unlhd_\omega$-universal $\omega$-graph $\Gamma^*$ in the class $\G$ of all connected $S_k$-free $\omega$-graphs on at least 3 vertices, such that $\Gamma^*$ satisfies the following stronger assertion: For every $\omega$-graph $G\in\G$ there is a topological $\omega$-embedding $\gamma^*$ of $G$ in $\Gamma^*$ which is degree preserving, that is $d_G(v)=d_{\Gamma^*}(\gamma^*(v))$ for all $v\in V(G)$.
\end{lemma}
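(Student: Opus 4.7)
The plan is to construct $\Gamma^*$ as the union of an increasing chain $\Gamma_0\subseteq\Gamma_1\subseteq\cdots$ of finite connected $\omega$-graphs in $\G$, arranged so that $\Gamma^*$ has the following richness: for every finite $F\subseteq V(\Gamma^*)$, every vertex $u\in\Gamma^*$, every edge $e$ incident to $u$, every target degree $d\in\{1,\ldots,k-1\}$, and every colour $c\in\omega$, there is a path in $\Gamma^*-F$ starting at $u$ via $e$ and ending at an otherwise fresh vertex of degree $d$ and colour $c$. To achieve this, I would enumerate all quadruples $(H,w,d,c)$ consisting of a finite connected $S_k$-free $\omega$-graph $H$ with a distinguished vertex $w$, a target degree $d$, and a target colour $c$. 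At stage $n+1$, I pick a vertex $u_n\in\Gamma_n$ of currently-small degree, attach by a long subdivided path of new colour-varied degree-$2$ vertices a fresh isomorphic copy of the next enumerated $H$, and make sure that the attached copy contains a vertex of the prescribed $(d,c)$-type. Because the attaching path is long and subdivided, every intermediate vertex has degree $2<k-1$; because $u_n$ is chosen with an unused edge slot, its degree in $\Gamma_{n+1}$ is still at most $k-1$; so $S_k$-freeness persists. Fair scheduling over all tuples $(H,w,d,c)$ and all current vertices $u\in\Gamma_n$ yields the advertised richness in the limit, and $\Gamma^*\in\G$ by construction.

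Given any $G\in\G$, I build the degree-preserving topological embedding $\gamma^*$ by induction on an enumeration $V(G)=\{v_0,v_1,\ldots\}$. At step $n$, I choose $\gamma^*(v_n)$ to be an unused vertex of $\Gamma^*$ of degree exactly $d_G(v_n)$ and colour $c(v_n)$; such a vertex exists in $\Gamma^*$ away from any finite already-used set by the richness property. Next, for each neighbour $v_m$ of $v_n$ already embedded ($m<n$), I invoke the richness of $\Gamma^*$ to route an internally-disjoint path $P^{v_nv_m}$ from $\gamma^*(v_n)$ to $\gamma^*(v_m)$ using a previously-unused edge at each endpoint, avoiding the finite set of already-used vertices and of internal vertices of earlier routing paths. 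Because $d_G(v_n)\le k-1$, the number of edges used at $\gamma^*(v_n)$ is exactly $d_G(v_n)=d_{\Gamma^*}(\gamma^*(v_n))$, which is consistent with reserving the remaining slots at $\gamma^*(v_n)$ for routing future paths from $v_n$ to its not-yet-embedded neighbours. The bookkeeping of used edges at each image vertex ensures a valid topological embedding in the end, and degree preservation is immediate from the choice of images.

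The main obstacle lies in the construction of $\Gamma^*$ itself: one has to simultaneously ensure connectedness, $S_k$-freeness, and the rich extension property, while keeping track that every (degree, colour) type is realized infinitely often and that each added gadget remains reachable via fresh internally-disjoint paths from every already-present vertex. The key device throughout is the long subdivided attaching path: it generates unlimited fresh internal vertices of degree $2$ (safely $S_k$-free), gives rise to independent corridors from each chosen attachment point, and accommodates arbitrary colour choices on the fly. The embedding argument is then a routine back-and-forth-style induction, whose only subtle point is respecting the fixed degree budget at every image vertex. Once this budgeting is handled, the topological $\omega$-embedding and the degree-preservation claim both drop out, completing the proof.
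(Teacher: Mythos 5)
There is a genuine gap, and it lies exactly where you locate the ``main obstacle'': the construction of $\Gamma^*$. As described, every new gadget is attached to the current graph by a single long pendant path, so each attachment path is a bridge of $\Gamma^*$ and every block of $\Gamma^*$ is contained in one finite gadget copy. Hence $\Gamma^*$ has no infinite $2$-connected subgraph. But already for $k\geq 4$ the class $\G$ contains, for instance, the infinite ladder (maximum degree $3\leq k-1$), and every subdivision of the ladder is an infinite $2$-connected graph; so the ladder is not a topological minor of your $\Gamma^*$ at all, colours aside, and no embedding argument can repair this. Relatedly, the richness property you postulate is purely local (from a vertex $u$ and an edge $e$, reach a \emph{fresh} vertex of prescribed degree and colour avoiding a finite set), whereas the inductive embedding step needs something global: paths joining two \emph{already chosen} image vertices, entering each of them through a prescribed unused edge-slot, pairwise internally disjoint, avoiding a finite set, and leaving the remaining reserved slots usable for neighbours that will only be embedded later. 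Because degree preservation forces $d_{\Gamma^*}(\gamma^*(v))=d_G(v)$ exactly, the corridors leaving the slots of the image vertices must in the end be matched bijectively with the edges of $G$; a local extension property plus fair scheduling does not secure such a matching, and with pendant-path attachments it demonstrably cannot.

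For comparison, the paper sidesteps the back-and-forth entirely with one explicit graph in which this global slot-to-edge bookkeeping is built in. Take pairwise disjoint rays $R_0,R_1,\ldots$ and add vertices $v_j$ ($j\geq 1$), joining $v_j$ to the $j$th vertex of each ray $R_i$ with $i\in f(j)$, where $f:\N_{\geq 1}\to[\N]^{<k}$ takes every value infinitely often and the colouring is chosen so that every colour appears on some $v_j$ in every fibre of $f$. Given $G\in\G$, one enumerates $E(G)=\{e_0,e_1,\ldots\}$ and maps each vertex $v$ to a $v_j$ of the right colour with $f(j)=\{i: v\text{ incident to }e_i\}$; then $d_{\Gamma^*}(v_j)=d_G(v)$ automatically, injectivity follows since distinct vertices of a connected graph on at least $3$ vertices have distinct incident-edge sets, and the ray $R_i$ itself serves as the corridor for the edge $e_i$, so the connecting paths are internally disjoint for free. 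If you want to keep a limit construction, you would have to encode this kind of global correspondence between edge-slots and prospective edges of $G$ (as the rays do), rather than rely on a local extension property.
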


\begin{proof}
Let $[\N]^{<k}$ be the (countable) set of all subsets of $\N$ of size less than $k$ and choose a function $f:\N_{\geq 1}\to[\N]^{<k}$ such that for every $A\in[\N]^{<k}$ there are infinitely many $n\in\N_{\geq 1}$ with $f(n)=A$. Furthermore, let $(R_i:i\in\N)$ be a family of pairwise disjoint rays.
Now we construct $\Gamma^*$ from the graph $\bigcup_{i\in\N}R_i$ by adding for all $j\in\N_{\geq 1}$ a vertex $v_j$, which we join to the $j$th vertex of each ray $R_i$ with $i\in f(j)$ (see Figure \ref{fig:sk-free}).
We define an $\omega$-colouring of $\Gamma^*$ in such a way that for every set $A\in [\N]^{<k}$ and for every colour $c\in\N$, there is an integer $j\in f^{-1}(A)$ such that $v_j$ has colour $c$. This is possible because $f^{-1}(A)$ is infinite for all $A\in [\N]^{<k}$. The vertices of $\bigcup_{i\in\N}R_i$ may be coloured arbitrarily.

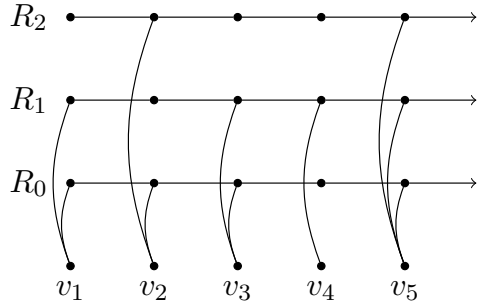
\begin{figure}[h]
\centering
\scalebox{1.1}{
\begin{tikzpicture}

\tikzstyle{v}=[draw,circle,color=black,fill=black,minimum size=2.5pt,inner sep=0pt]

\node[v] (v00) at (0,0) {};
\node[v] (v10) at (1,0) {};
\node[v] (v20) at (2,0) {};
\node[v] (v30) at (3,0) {};
\node[v] (v40) at (4,0) {};
\node[v] (v01) at (0,1) {};
\node[v] (v11) at (1,1) {};
\node[v] (v21) at (2,1) {};
\node[v] (v31) at (3,1) {};
\node[v] (v41) at (4,1) {};
\node[v] (v02) at (0,2) {};
\node[v] (v12) at (1,2) {};
\node[v] (v22) at (2,2) {};
\node[v] (v32) at (3,2) {};
\node[v] (v42) at (4,2) {};
\node[v] (v03) at (0,3) {};
\node[v] (v13) at (1,3) {};
\node[v] (v23) at (2,3) {};
\node[v] (v33) at (3,3) {};
\node[v] (v43) at (4,3) {};
\node (i1) at (5,1) {};
\node (i2) at (5,2) {};
\node (i3) at (5,3) {};

\draw (v01) to (v11);
\draw (v11) to (v21);
\draw (v21) to (v31);
\draw (v31) to (v41);
\draw[->] (v41) to (i1);
\draw (v02) to (v12);
\draw (v12) to (v22);
\draw (v22) to (v32);
\draw (v32) to (v42);
\draw[->] (v42) to (i2);
\draw (v03) to (v13);
\draw (v13) to (v23);
\draw (v23) to (v33);
\draw (v33) to (v43);
\draw[->] (v43) to (i3);

\draw[bend left = 20] (v00) to (v01);
\draw[bend left = 20] (v00) to (v02);
\draw[bend left = 20] (v10) to (v11);
\draw[bend left = 20] (v10) to (v13);
\draw[bend left = 20] (v20) to (v21);
\draw[bend left = 20] (v20) to (v22);
\draw[bend left = 20] (v30) to (v32);
\draw[bend left = 20] (v40) to (v41);
\draw[bend left = 20] (v40) to (v42);
\draw[bend left = 20] (v40) to (v43);

\node at (-0.5,1) {\small{$R_0$}};
\node at (-0.5,2) {\small{$R_1$}};
\node at (-0.5,3) {\small{$R_2$}};
\node at (0,-0.3) {\small{$v_1$}};
\node at (1,-0.3) {\small{$v_2$}};
\node at (2,-0.3) {\small{$v_3$}};
\node at (3,-0.3) {\small{$v_4$}};
\node at (4,-0.3) {\small{$v_5$}};

\end{tikzpicture}
}
\caption{The graph $\Gamma^*$ when $f(1)=\{0,1\}$, $f(2)=\{0,2\}$, $f(3)=\{0,1\}$, $\dots$ drawn without regarding its vertex colouring.}
\label{fig:sk-free}
\end{figure}

$\Gamma^*$ does not contain a copy of $S_k$ because the vertices $v_j$ for $j\in\N$ have degree less than $k$ and all other vertices have degree at most 3. It remains to find a degree preserving topological $\omega$-embedding $\gamma^*$ of $G$ in $\Gamma^*$ for every $\omega$-graph $G\in\G$. We enumerate $E(G)=:\{e_0,e_1,\ldots\}$. For every vertex $v\in V(G)$, we pick an integer
$$j\in f^{-1}(\{i\in\N: v\text{ is incident to }e_i \text{ in } G\})$$
such that $v$ has the same colour in $G$ as $v_j =: \gamma^*(v)$ in $\Gamma$.
Then clearly, $\gamma^*$ is degree preserving. Next, $\gamma^*$ is injective since in a connected graph on at least 3 vertices, the set of incident edges is different for every vertex. Finally, for all edges $e_i=vw\in E(G)$ we have to find internally disjoint $\gamma^*(v)$--$\gamma^*(w)$ paths $P^{e_i}$ in $\Gamma^*$ which avoid the image of $\gamma^*$. Each path $P^{e_i}$ can be built using the ray $R_i$ together with the $\gamma^*(v)$--$R_i$ edge and the $\gamma^*(w)$--$R_i$ edge in $\Gamma^*$.
\end{proof}

We remark that we can also use the construction of $\Gamma$ in the proof of Lemma \ref{sk-free} for building a $\unlhd$-universal graph in the class of all locally finite graphs if we replace $[\N]^{<k}$ by $[\N]^{<\omega}$. A $\leq$-universal locally finite graph however does not exist; this is an easy observation which was first made by de Brujn, see \cite{rado}.

Our next aim in the proof of Theorem \ref{positive} is Lemma \ref{decomposition}, a result on the structure of $T$-free graphs. In the proof of Lemma \ref{decomposition}, we analyse the block structure of $T$-free graphs (a block of a graph $G$ is a maximal connected subgraph of $G$ without a cutvertex). We need the following lemmas:

\begin{lemma}[\cite{diestel} Chapter 1, Exercise 3]\label{cycle}
Let $n\in\N$. Every 2-connected graph $G$ containing a path of length $n^2$ contains a cycle of length at least $n$.
\end{lemma}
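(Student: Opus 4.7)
The plan is to argue by contradiction: suppose every cycle of $G$ has length at most $n-1$, and derive a contradiction with the existence of the path $P = v_0 v_1 \cdots v_{n^2}$. The cases $n \leq 2$ are immediate since any 2-connected graph (being on at least three vertices) contains a cycle of length at least $3$, so we may assume $n \geq 3$.

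The key idea is to exhibit a short cycle through the endpoints of $P$ using 2-connectedness, and then exploit the length of $P$ to force a long cycle. First, apply Menger's theorem to the pair $(v_0, v_{n^2})$ in the 2-connected graph $G$ (noting that $v_0 \neq v_{n^2}$ since $n^2 \geq 1$): this yields two internally disjoint $v_0$--$v_{n^2}$ paths $Q_1, Q_2$, whose union $C := Q_1 \cup Q_2$ is a cycle in $G$. By hypothesis $|C| \leq n-1$, and in particular $|V(C)| \leq n-1$.

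Next, analyse the intersection $V'' := V(P) \cap V(C)$, which contains the endpoints $v_0, v_{n^2}$ of $P$ but has at most $n-1$ elements. Enumerating $V'' = \{v_{i_0}, v_{i_1}, \ldots, v_{i_k}\}$ with $0 = i_0 < i_1 < \cdots < i_k = n^2$, we have $k \leq n-2$, so the consecutive differences $i_{j+1} - i_j$ sum to $n^2$ over at most $n-2$ terms. A direct pigeonhole bound then gives some $j$ with $i_{j+1} - i_j \geq n^2/(n-2) \geq n$.

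Finally, fix such a long gap and consider the $P$-subpath $P'$ from $v_{i_j}$ to $v_{i_{j+1}}$. By the choice of $V''$, the endpoints of $P'$ lie on $C$ while its internal vertices avoid $V(C)$; gluing $P'$ to either of the two arcs of $C$ between $v_{i_j}$ and $v_{i_{j+1}}$ therefore produces a cycle of length at least $(i_{j+1} - i_j) + 1 \geq n+1$, contradicting the assumption. The only step requiring any real care is the pigeonhole computation, which is elementary but delivers exactly the bound needed; invoking Menger to produce the auxiliary cycle $C$ is the one genuinely non-routine move, and from there everything follows from the definitions.
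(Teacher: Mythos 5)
The paper does not prove this lemma at all -- it is quoted from Diestel's book (Chapter 1, Exercise 3) -- so there is no in-paper argument to compare against; your proof is correct and is essentially the standard solution to that exercise. The chain of steps checks out: 2-connectedness gives a cycle $C$ through $v_0$ and $v_{n^2}$, the assumption $|C|\leq n-1$ forces at most $n-1$ intersection points with $P$, the pigeonhole bound $n^2/(n-2)\geq n$ (valid since you reduced to $n\geq 3$) yields a $C$-path $P'$ of length at least $n$ whose interior avoids $C$, and gluing $P'$ to an arc of $C$ gives a cycle of length at least $n+1$, the desired contradiction.
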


We recall that $k\geq 3$, $p_{k-2}=1$ and $T=T(p_1,\ldots,p_k)$.

\begin{lemma}\label{boundeddiam}
Let $G$ be a $T$-free graph and let $B$ be a block of $G$ that contains a vertex $v\in V(B)$ with $d_G(v)\geq k$. Then there is no path of length $m:=(k+1)^2(2p_k)^2$ in $B$.
\end{lemma}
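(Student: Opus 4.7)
I would argue by contradiction: assume that $B$ contains a path of length $m=(k+1)^2(2p_k)^2$. By Lemma~\ref{cycle} with $n=(k+1)(2p_k)$, $B$ then contains a cycle $C$ of length at least $(k+1)(2p_k)$. Since $T$ is a subdivided star, $T$-freeness of $G$ is equivalent to $G$ containing no subdivision of $T$, so it suffices to exhibit a subdivision of $T$ in $G$ centred at $v$. First I reduce to a cycle $C^\ast$ through $v$: if $v\in V(C)$, set $C^\ast:=C$; otherwise $2$-connectedness of $B$ and Menger's theorem provide two internally disjoint paths from $v$ to $V(C)$, which together with $C$ form a theta-subgraph whose ``long'' cycle $C^\ast$ through $v$ still has length comparable to $|C|$.

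Label the neighbours of $v$ lying on $C^\ast$ as $w_1,\dots,w_r$ in cyclic order along $C^\ast$ and consider the $r-1$ arcs of $C^\ast\setminus\{v\}$ between consecutive $w_i$'s; their lengths sum to $|C^\ast|-2$. Splitting into cases by how many of these arcs have length at least $p_k$, a pigeonhole argument yields the following dichotomy: either (i)~two distinct arcs each have length at least $p_k$, or (ii)~some single arc has length at least $2p_k$. In case~(i), I form the long legs as $v\,w_a\,\dots$ and $v\,w_b\,\dots$, continuing for $p_k-1$ and $p_{k-1}-1$ further steps along the two long arcs respectively. In case~(ii), both long legs fit inside the same long arc between $w_a$ and $w_{a+1}$, one starting at $w_a$ going forward for $p_k-1$ steps, the other starting at $w_{a+1}$ going backward for $p_{k-1}-1$ steps. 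In either case the two long legs are internally disjoint, have the required lengths $\geq p_{k-1}$ and $\geq p_k$, contain only two of the $w_i$'s as internal vertices, and have endpoints strictly inside an arc and therefore \emph{not} in $N_G(v)$.

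Because at most two of $v$'s $\geq k$ neighbours are consumed by the long legs, the remaining $\geq k-2$ neighbours each yield a pendant edge $vu$ disjoint from the long legs: off-cycle neighbours avoid the long-leg interiors automatically, while on-cycle neighbours other than the two ``launching'' $w_i$'s lie outside the chosen arc portions by construction. Together with the two long legs this gives a subdivision of $T$ in $G$ centred at $v$, the desired contradiction.

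The main obstacle, and what dictates the precise value $m=(k+1)^2(2p_k)^2$, is the pigeonhole dichotomy together with the pendant book-keeping: after losing a square root in Lemma~\ref{cycle}, the cycle length $(k+1)(2p_k)$ is just enough to force one of cases~(i),~(ii) while simultaneously leaving at most two of $v$'s neighbours trapped inside the long legs. A smaller constant would allow a pathological arrangement of $N_G(v)\cap V(C^\ast)$ along $C^\ast$ in which every candidate for a long leg sweeps up too many neighbours of $v$; the $v\notin V(C)$ reduction adds the further subtlety that one must pass to the theta-structure on $v\cup V(C)$ rather than a single sub-cycle so as not to lose too much cycle length.
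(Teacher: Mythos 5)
Your leg construction and the pendant bookkeeping are sound as far as they go, but the step you call the ``pigeonhole dichotomy'' is a genuine gap: it does not follow from any lower bound on $|C^*|$, because the number $r$ of neighbours of $v$ on $C^*$ is not bounded. The hypothesis $d_G(v)\geq k$ bounds the degree of $v$ from \emph{below}, not from above, so nothing you have established prevents $N_G(v)\cap V(C^*)$ from containing, say, every second vertex of $C^*$; then every arc between consecutive $w_i$'s has length at most $2$, and for $p_k\geq 3$ neither (i) nor (ii) holds, however long $C^*$ is. This also shows your closing diagnosis is off: such a ``pathological arrangement of $N_G(v)\cap V(C^*)$'' defeats the stated dichotomy for \emph{every} value of $m$, so no choice of constant repairs the argument -- what is missing is an extra idea, not a larger $m$. (In that configuration $G$ does contain $T$, but for a different reason: $v$ then has far more than $k$ neighbours on $C^*$, so legs that sweep up some of them still leave $k-2$ spare pendant leaves. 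Your proof as written never produces this copy of $T$.)

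The repair is to run the pigeonhole over boundedly many arcs whose endpoints you control: select $\min(r,k+1)$ of the $w_i$, always including the two cycle-neighbours of $v$, and consider only the at most $k$ arcs between consecutive \emph{selected} vertices. Their lengths still sum to $|C^*|-2\geq(k+1)p_k$, so your dichotomy now does hold, and in the construction only selected neighbours matter: the two launching vertices are selected, the interiors of the chosen arcs contain no other selected $w_i$, and unselected neighbours swallowed by a leg (or serving as a leg's endpoint) do no harm, since the remaining $k-1$ selected neighbours supply the pendant edges. This bounded-selection device is exactly what the paper's argument hinges on, though its setup differs from yours: it never forces $v$ onto the cycle, but instead splits according to $|V(S_B(v))\cap V(C)|$. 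When the star of $v$ meets $C$ in at most one vertex, two disjoint $S_B(v)$--$C$ paths (from $2$-connectedness) plus a subpath of $C$ of length at least $2p_k+1$ form the two long legs, which meet $N_G(v)$ in exactly two vertices because those paths and $C$ avoid $S_B(v)$ internally; when the star meets $C$ in at least two vertices, a single $S_B(v)$-path of length at least $2p_k$ inside $C$ hosts both long legs, the pigeonhole being taken over the arcs of $C$ determined by vertices of $S_B(v)$, whose interiors are neighbour-free by definition. Your factor-of-two loss in passing to $C^*$ is harmless once the arcs are bounded as above, so the unbounded number of arcs is the one real defect.
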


\begin{proof}
Suppose for a contradiction that $B$ contains a path of length $m$. Therefore, $|B|\geq 3$ and it follows that $B$ is 2-connected. By Lemma \ref{cycle}, there is a cycle $C$ in $B$ of length at least $\sqrt{m}=(k+1)\cdot 2p_k$.
First we consider the case that $S_B(v)\cap C=\emptyset$. By 2-connectedness of $B$ there are two disjoint $S_B(v)$--$C$ paths $P^1$ and $P^2$ in $B$. Denote the endvertex of $P^i$ in $C$ by $x_i$. We can find an $x_1$--$x_2$ path $Q$ in $C$ of length at least $2p_k+1$ since $|C|\geq 8p\geq 2(2p_k+1)$. However, there is a copy of $T$ in $S_G(v)\cup P^1\cup P^2\cup Q$, a contradiction.

If $S_B(v)\cap C=\{w\}$ for some vertex $w$, then let $P^1=w$ be a path of length 1. Note that $w\neq v$. Since $B$ is 2-connected, there is an $S_B(v)$--$C$ path $P^2$ in $B-w$. Now we can prove that $T\leq G$ as in the case $S_B(v)\cap C=\emptyset$.

Finally, suppose that $|V(S_B(v))\cap V(C)|\geq 2$. There are at most $|S_B(v)|=k+1$ many distinct $S_B(v)$-paths contained in $C$. Therefore, there must be an $S_B(v)$-path $P$ in $C$ of length at least $2p_k$ since $|C|\geq(k+1)\cdot 2p_k$. This is a contradiction as $S_G(v)\cup P$ contains a copy of $T$.
\end{proof}

\begin{lemma}\label{decomposition}
Let $m$ be as in Lemma \ref{boundeddiam} and let $G$ be a connected $T$-free graph with $P_{4p_k m}\leq G$. Then there is a connected induced subgraph $G^*$ of $G$ and for every vertex $v\in V(G^*)$ there is a connected induced subgraph $G_v$ of $G$ such that the following properties hold:

\begin{itemize}
\item[(1)] $G=G^*\cup\bigcup_{v\in V(G^*)}G_v$,
\item[(2)] $G^*\cap G_v=\{v\}$ and $G_v\cap G_w=\emptyset$ for all $v\neq w\in V(G^*)$,
\item[(3)] $d_G(v)<k$ for all $v\in V(G^*)$,
\item[(4)] $P_{2p_k}\leq G^*$, and
\item[(5)] $P_{8p_k+2m}\not\leq G_v$ for all $v\in V(G^*)$.
\end{itemize}
\end{lemma}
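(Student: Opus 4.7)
The plan is to call a block $B$ of $G$ \emph{cool} if every vertex of $B$ has $G$-degree less than $k$ and \emph{hot} otherwise, and then to take $G^*$ to be a ``cool component'' of $G$, by which I mean a maximal connected subgraph of $G$ formed by cool blocks sharing common cut-vertices. By Lemma \ref{boundeddiam}, every hot block contains no path of length $m$, so hot blocks are ``small'' in a concrete sense. $G^*$ will serve as the low-degree backbone, while each bag $G_v$ will consist of the sub-block-trees of $G$ hanging off $G^*$ at $v$.

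First I would show that some cool component of $G$ contains a path $Q$ of length $2p_k$. Since the given path $P$ of length $4p_k m$ is simple, each block of $G$ is traversed by $P$ in a single contiguous sub-path. I partition $P$ into maximal runs through consecutive cool blocks (``cool runs'') and runs through consecutive hot blocks. The per-block length bound $m-1$ on contributions of hot blocks (from Lemma \ref{boundeddiam}), together with the overall length $4p_k m$ of $P$, yields by a direct counting argument that some cool run has length at least $2p_k$. All the cool blocks traversed by such a run are pairwise joined at low-degree cut-vertices on $P$ and hence lie in a single cool component of $G$, which therefore contains $Q$.

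Second I would take $G^*$ to be the cool component of $G$ containing $Q$. Then $V(G^*)$ is the vertex set of a connected sub-block-tree of the block-tree of $G$, so each component $D$ of $G - V(G^*)$ attaches to $V(G^*)$ at a single cut-vertex $v(D) \in V(G^*)$. For $v \in V(G^*)$, set $G_v := G[\{v\} \cup \bigcup_{v(D)=v} V(D)]$. Property (3) holds because $V(G^*)$ consists of vertices in cool blocks; (4) holds because $Q \subseteq G^*$; and (1) and (2) follow from the single-cut-vertex attachment.

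The last and hardest step is verifying (5). Suppose for contradiction that some bag $G_v$ contains a path $R$ of length $8p_k + 2m$. By the definition of a cool component, the bag $G_v$ consists of hot blocks and cool blocks lying in cool components other than $G^*$'s, and any two distinct cool components are separated in the block-tree of $G$ by at least one hot block; hence $R$ crosses some hot block and so passes near a high-degree vertex $u$ of $G$. The constant $8p_k + 2m$ is tuned so that, after a detour of length at most $2m$ through hot blocks to reach $u$ from a point of $R$, there remain at least $p_k$ edges of $R$ on one side of $u$ and at least $p_{k-1}$ on the other. These two sub-paths serve as the two long arms of a copy of $T$ centred at $u$; together with $k-2$ of $u$'s remaining neighbours as short arms, which exist because $d_G(u) \geq k$ and which can be chosen disjoint from the arms, this gives $T \leq G$, contradicting $T$-freeness. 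The main obstacle is precisely this embedding step: making the two long arms and the $k-2$ short arms pairwise internally disjoint against possible chords of $R$ and the local structure of hot blocks is what forces the slack $8p_k + 2m$ over the naive $p_k + p_{k-1} + m$.
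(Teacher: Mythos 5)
Your overall skeleton (hot/cool blocks, a maximal ``cool component'' as $G^*$, hanging bags $G_v$ at cutvertices) is essentially the paper's decomposition, but two steps that you treat as routine are exactly where the real work lies, and as written they fail. First, the counting argument in your Step 1 does not go through: Lemma \ref{boundeddiam} bounds the contribution of each hot block by $m-1$, but nothing bounds the \emph{number} of hot blocks that $P$ traverses, so the total hot contribution is unbounded and no cool run of length $2p_k$ need appear. (Picture a long chain of small blocks, each containing a vertex of $G$-degree at least $k$, carrying the whole path; your counting, which never invokes $T$-freeness, cannot exclude this.) What rules it out is a separate use of $T$-freeness: if a vertex $v$ with $d_G(v)\geq k$ lies in a block of the chain with at least $p_k$ blocks on each side, then two disjoint paths from $S_G(v)$ to the two neighbouring cutvertices, extended through the adjacent blocks, give the two long arms of a copy of $T$. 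This is precisely the paper's case distinction (a single block containing a path of length $m$, versus a chain of $4p_k$ blocks whose middle $2p_k$ blocks are forced to be cool), and it is missing from your argument.

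Second, your verification of (5) has a genuine gap. A path $R$ of length $8p_k+2m$ in $G_v$ need not ``cross some hot block'': it may lie entirely inside a single cool component of $G_v$ different from $G^*$, and nothing you have proved bounds the size of such a component. Even when $R$ does meet a hot block, it may do so only near one of its ends, and then you cannot carve both long arms of $T$ out of $R$ on the two sides of the high-degree vertex. The correct argument uses the maximality of $G^*$: the block $D$ of $G_v$ containing $v$ must contain a vertex $w$ with $d_G(w)\geq k$, and the copy of $T$ is centred at $w$ with one long arm leaving $D$ through $v$ into $G^*$ --- this is exactly where property (4), the path of length $2p_k$ in $G^*$, is needed --- while the other long arm runs through $D$ into the branch of $G_v$ containing the long path (disjointness coming from $2$-connectedness of $D$). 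The fact that your sketch never uses (4) in proving (5) is a reliable symptom that the embedding cannot be completed in the form you describe; the bound $8p_k+2m$ then arises by splitting a path of $G_v$ at $v$ into two pieces, each of which avoids paths of length $4p_k+m$ (at most $m-1$ inside $D$ plus at most $2p_k-1$ in a branch at each end), not from a ``detour of length at most $2m$'' argument.
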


\begin{proof}
Let $A$ be the set of cutvertices in $G$ and $\B$ the set of blocks of $G$ and $K$ the block graph of $G$. Recall that $V(K)=A\cup\B$ and $E(G)=\{aB:a\in A,B\in\B, a\in B\}$ and that $K$ is a tree. Since $G$ contains a path of length $4p_k m$, there is either a block $B\in\B$ containing a path of length $m$ or there is a path $P=B_1a_1B_2a_2\ldots B_{4p_k-1}a_{4p_k-1}B_{4p_k}$ in $K$ containing $4p_k$ blocks. In the first case let $H^*:=B$ and in the second case let $$H^*:=\bigcup_{p_k+1\leq i\leq 3p_k}B_i.$$

In both cases, there is clearly a path of length $2p_k$ in $H^*$. We also have $d_G(v)<k$ for all $v\in V(H^*)$: In the first case, this holds by Lemma \ref{boundeddiam}. For the second case, let us suppose that there are a block $B_i$ with $p_k+1\leq i\leq 3p_k$ and a vertex $v\in B_i$ with $d_G(v)\geq k$.
Since $B_i$ is 2-connected, there is an $S_{B_i}(v)$--$a_{i-1}$ path $P^1$ and an $S_{B_i}(v)$--$a_i$ path $P^2$ in $B_i$ such that $P^1$ and $P^2$ are disjoint. Now we extend $S_G(v)\cup P^1\cup P^2$ to a copy of $T$ in $G$ by extending the path $P^1$ into the blocks $B_j$ with $j<i$ and extending $P^2$ into the blocks $B_j$ with $j>i$. This contradicts $G$ being $T$-free. Hence $d_G(v)<k$ for all $v\in V(H^*)$.

If $K'$ is a subtree of $K$, we write $\bigcup K'$ for $\bigcup\{B\in\B:B\in V(K')\}$. Let $\Tilde{K}^*$ be the maximal subtree of $K$ that contains all vertices of $K$ which are blocks of $H^*$, such that there is no vertex $v\in\bigcup\Tilde{K}^*$ with $d_G(v)\geq k$. We delete all leaves of $\Tilde{K}^*$ that are cutvertices in $G$ and call the resulting graph $K^*$. Then
$$G^*:=\bigcup K^*$$
satisfies property (3) by definition of $K^*$ and property (4) since already $H^*$ contains a path of length $2p_k$.

For all components $H$ of $K-K^*$ we have $G^*\cap\bigcup H=\{v\}$ for some vertex $v\in V(G^*)\cap A$. Let $K_v:=H$ and $G_v:=\bigcup K_v$. For the proof of property (5), we begin by decomposing $G_v$ into a family $\G_v$ of graphs that only intersect in $v$. Let $\K_v$ be the set of components of $K_v-v$ and $\G_v:=\{\bigcup K':K'\in\K_v\}$. For all $K'\in\K_v$ we show that the graph $G':=\bigcup K'\in\G_v$ (Figure \ref{blockstructure}) is $P_{4p_k+m}$-free, which implies that $G_v$ is $P_{8p_k+2m}$-free and thus proves property (5).

Let $D$ be the block of $G'$ containing $v$. By maximality of $K^*$, we know that $D$ contains a vertex $w$ with $d_G(w)\geq k$. Therefore, Lemma \ref{boundeddiam} implies that $D$ does not contain a path of length $m$. Additionally, for every component $K''$ of $K'-D$ the graph $G'':=\bigcup K''$ does not contain a path of length $2p_k$ (Figure~\ref{blockstructure}): Suppose that $G''$ contains a path $P''$ of length $2p_k$. We denote the unique vertex in $D\cap G''$ by $u$.
Let $P_x$ be an $S_D(w)$--$x$ path in $D$ for $x\in\{v,u\}$ such that $P_v$ and $P_u$ are disjoint.
Further, let $P^*$ be a path of length $2p_k$ in $G^*$, which exists by property (4), and let $Q_v$ be a $v$--$P^*$ path in $G^*$ and $Q_u$ a $u$--$P''$ path in $G''$. However,
$$S_G(w)\cup P_v\cup P_u\cup Q_v\cup Q_u\cup P^*\cup P''\subseteq G$$
contains a copy of $T$. Therefore, $G'' = \bigcup K''$ does not contain a path of length $2p_k$ for every choice of $K''$. Together with $D$ being $P_m$-free, this implies that there is no path of length $4p_k+m$ in $G'$.

Finally, for all $v\in V(G^*)$ for which $G_v$ has not been defined yet, let $G_v:=\{v\}$. Then properties (1) and (2) are clearly satisfied.
\end{proof}

\begin{figure}[ht]
\centering
\includegraphics[scale=0.75]{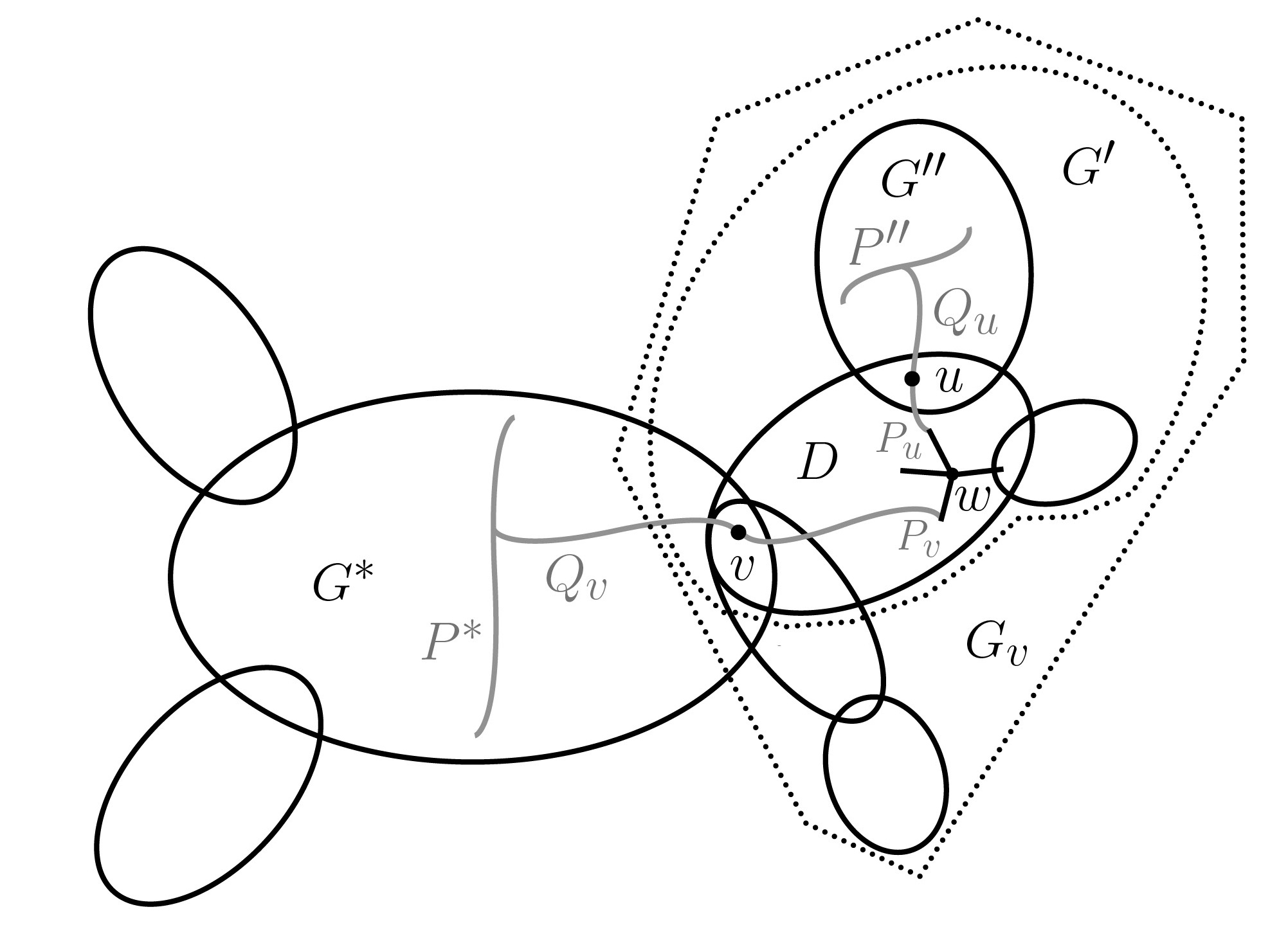}
\caption{The graph $G$ and its subgraphs}
\label{blockstructure}
\end{figure}

For the proof of Theorem \ref{positive}, we need the following Lemma by Cherlin and Tallgren:

\begin{lemma}[\cite{cherlintallgren}]\label{pn-free}
Let $\alpha\in\N$ and let $\X$ be a finite set of finite connected $\alpha$-graphs. Suppose that for some $n\in\N$, every $\alpha$-coloured version of $P_n$ is contained in $\X$. Then there is a $\leq_\alpha$-universal $\alpha$-graph in $\Forb(\X;\leq_\alpha)$.
\end{lemma}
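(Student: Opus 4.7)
The key observation is that, because $\X$ contains every $\alpha$-colouring of $P_n$, any $\alpha$-graph $G \in \Forb(\X;\leq_\alpha)$ is (ignoring colours) $P_n$-free as a subgraph, so every connected component of $G$ has diameter at most $n-1$. This uniform bound on the diameter of components, together with the finiteness of $\X$, is what I would exploit throughout.

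My first step would be to reduce to the connected case. Every $X \in \X$ is connected, so any copy of $X$ lies inside a single component; hence $\Forb(\X;\leq_\alpha)$ is closed under disjoint unions. It therefore suffices to build, for the class $\mathcal{C}$ of connected members of $\Forb(\X;\leq_\alpha)$, a $\leq_\alpha$-universal connected $\alpha$-graph $\Delta$; taking countably many disjoint copies of $\Delta$, supplemented by countably many isolated vertices of each colour $c < \alpha$, then yields a universal graph for the whole class.

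The construction of $\Delta$ would go by a Fra\"iss\'e-style amalgamation. Every $G \in \mathcal{C}$ decomposes, after choosing a root, into BFS-layers $L_0, L_1, \ldots, L_{n-1}$, with edges only within a layer or between consecutive layers. I would enumerate all finite one-step extensions $H \subseteq H'$ of finite members of $\mathcal{C}$ (both layered) and, in a back-and-forth fashion, realise every such extension somewhere in the partial $\Delta$ built so far, while verifying at each step that we remain in $\Forb(\X;\leq_\alpha)$. A standard inductive argument then $\leq_\alpha$-embeds each $G \in \mathcal{C}$ into $\Delta$ layer by layer, using only the bounded number of layers that $G$ has.

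The main obstacle is to verify that each amalgamation step preserves membership in $\Forb(\X;\leq_\alpha)$. A new copy of some $X \in \X$ introduced by the gluing would have to straddle the seam between the old finite subgraph $H$ and the freshly attached piece $H'$; since $X$ is connected and of bounded size, and both $H$ and $H'$ lie in $\Forb(\X;\leq_\alpha)$ by induction, I would argue that such a copy must already have been contained in one side alone, a contradiction. This is essentially a ``free amalgamation over a finite common subgraph preserves forbidden finite connected configurations'' principle; formalising this, together with the countable bookkeeping that ensures every $G \in \mathcal{C}$ is eventually embedded, is the technical heart of the proof.
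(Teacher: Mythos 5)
This lemma is not proved in the paper at all: it is quoted from Cherlin and Tallgren, and the known proofs (Komj\'ath--Mekler--Pach for a single forbidden path, Cherlin--Tallgren and Cherlin--Shelah--Shi for variants with additional constraints) are genuinely involved. Your sketch replaces that work with the principle that ``free amalgamation over a finite common subgraph preserves forbidden finite connected configurations'', and this principle is false -- it is exactly where the difficulty of the whole subject sits. A connected forbidden graph $X$ need not lie on one side of the seam: it can pass \emph{through} the common part, using the new vertices on one side and old vertices on the other. The simplest instance already falls under the hypotheses of the lemma: take $\alpha=1$, $n=2$, $\X=\{P_2\}$, so the class consists of disjoint unions of edges and isolated vertices. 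If the partial universal graph $\Delta$ contains an edge $uw$ and you freely realise the one-step extension ``attach a new neighbour $v$ to $u$'', you create the path $wuv$ of length $2$, a forbidden copy contained in neither side. Of course, for universality you do not need to realise this extension at that particular $u$ -- but your ``layer by layer'' embedding of an arbitrary countable $G$ requires the extension property at the finite configurations you have \emph{already} committed to (richness at every relevant site), and that is precisely what cannot be arranged by free amalgamation. Nothing in your argument repairs this: the bounded diameter of components is used only for bookkeeping, not to control copies of members of $\X$ that straddle a seam.

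A further warning sign is that your argument never uses the hypothesis that \emph{all} $\alpha$-colourings of $P_n$ lie in $\X$ in any essential way (only to bound the number of BFS layers). If the amalgamation step really preserved membership in $\Forb(\X;\leq_\alpha)$, essentially the same construction would produce a $\leq$-universal graph in $\Forb(T;\leq)$ for every finite connected $T$, contradicting Theorem \ref{forb-tree} (Cherlin--Shelah), by which such universal graphs exist only for paths and near-paths. The forbidden path is not a convenience but the engine of the true proof: it forces a strong tree-like/bounded structure on the graphs in the class (compare the role of Lemma \ref{decomposition} in this paper), and the universal object is built from that structure rather than by generic gluing. As it stands, the proposal assumes away the technical heart of the lemma.
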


The idea for our proof of Theorem \ref{positive} is to build a $\unlhd$-universal graph $\Gamma$ in $\Forb(T;\leq)$ by starting with the $\unlhd$-universal $S_k$-free graph $\Gamma^*$ from Lemma \ref{sk-free} and attaching to every vertex $v\in V(\Gamma^*)$ a $\leq$-universal $P_{8p_k+2m}$-free graph $\Gamma_v$ that exists by Lemma \ref{pn-free}. If $G$ is any connected $T$-free graph, we want to decompose $G$ as in Lemma \ref{decomposition} and find a topological embedding $\gamma$ of $G$ in $\Gamma$ by embedding $G^*$ in $\Gamma^*$ and each graph $G_v$ in $\Gamma_{\gamma(v)}$.

\begin{proof}[Proof of Theorem \ref{positive}]
Recall that we have already seen that the theorem holds for $k = 2$, so we can assume that $k \geq 3$.
We will build a graph $\Gamma$ that is $\unlhd$-universal in the class of all connected $T$-free graphs containing $P_{4p_k m}$. By Lemma \ref{pn-free} (applied with $\alpha=1$), there exists a $\leq$-universal graph $\Gamma'$ in $\Forb(T,P_{4p_k m};\leq)$. Then the disjoint union of $\Gamma'$ with countably many copies of $\Gamma$ is $\unlhd$-universal in $\Forb(T;\leq)$.

If $H$ is a 2-graph with exactly one vertex $v$ of colour 1, then we write $\overline{H}$ for the graph obtained from $H$ by attaching a path of length $p_k$ to $v$. Let
\begin{itemize}
\item $\X_1$ be the set of all connected 2-graphs $H$ with $V(H)\subseteq\{1,\ldots,|T|\}$ such that $H$ contains exactly one vertex of colour 1 and $T\leq\overline{H}$,
\item $\X_2$ be the set of all 2-coloured versions of the graph $P_{8p_k+2m}$, and
\item $\X_3$ be the set of all 2-coloured versions $P$ of the paths $P_i$ for $i<8p_k+2m$ such that all the inner vertices of $P$ have colour 0 and the endvertices have colour 1.
\end{itemize}
For all $n<k$, let
\begin{itemize}
\item $\X_4^n$ be the one-element set containing the 2-graph $S_{n+1}$ so that the centre of $S_{n+1}$ has colour 1 and all other vertices have colour 0, and
\item $\X^n:=\X_1\cup \X_2\cup \X_3\cup \X_4^n$.
\end{itemize}
Since $\X_2\subseteq\X^n$, there exists a $\leq_2$-universal $\X^n$-free 2-graph $\Gamma^n$ by Lemma \ref{pn-free}. We fix an enumeration $\Gamma^n_0,\Gamma^n_1,\Gamma^n_2\ldots$ of all components of $\Gamma^n$ which contain a vertex of degree at least $n$ that has colour 1. Note that each component $\Gamma^n_i$ contains exactly one vertex $u$ of colour 1 because $\Gamma^n_i$ is connected and $\X_2\cup\X_3$-free. The degree of $u$ must be $n$ since $\Gamma^n_i$ is $\X_4^n$-free.

Let $\Gamma^*$ be the $\omega$-graph from Lemma \ref{sk-free}. For every vertex $v\in V(\Gamma^*$), let $\Gamma_v:=\Gamma^{n(v)}_{c(v)}$ where
$$n(v):=k-d_{\Gamma^*}(v)-1$$
and $c(v)$ is the colour of $v$ in $\Gamma^*$. Notice that $n(v)\geq 0$ since $\Gamma^*$ is $S_k$-free. By renaming vertices of each $\Gamma_v$, we obtain for all $v\neq w\in V(\Gamma^*)$ that $\Gamma_v$ and $\Gamma_w$ are disjoint, the unique vertex of colour 1 in $\Gamma_v$ is $v$, and $\Gamma^*\cap \Gamma_v=\{v\}$. We define (the uncoloured graph)
$$\Gamma:=\Gamma^*\cup\bigcup_{v\in V(\Gamma^*)}\Gamma_v.$$
We need to show that $\Gamma$ is $T$-free and that $G\unlhd\Gamma$ for every connected $T$-free graph $G$ with $P_{4p_k m}\leq G$.

Suppose for a contradiction that there is an embedding $\gamma$ of $T$ in $\Gamma$ and let $z$ be the centre of $T$. We have $\gamma(z)\in \Gamma_v-\Gamma^*$ for some $v\in V(\Gamma^*)$ since any vertex $u\in V(\Gamma^*)$ satisfies $d_\Gamma(u)=d_{\Gamma^*}(u)+d_{\Gamma_u}(u)=k-1$. Therefore, $\gamma(T)\cap \Gamma^*$ is either empty or a path of length at most $p_k$ with endvertex $v$, which contradicts $\Gamma_v$ being $\X_1$-free.

Now let $G$ be an arbitrary connected $T$-free graph with $P_{4p_k m}\leq G$ and consider the graph $G^*$ and the graphs $G_v$ from Lemma \ref{decomposition}.
We furnish each $G_v$ with a 2-colouring by colouring $v$ with 1 and all other vertices with 0. Then $G_v$ is $\X_1$-free: Suppose that there is a graph $H\in\X_1$ and a 2-embedding $f$ from $H$ to $G_v$. Since $G^*$ is connected with $P_{2p_k}\leq G^*$ by Lemma \ref{decomposition} (4), there is a path $P$ of length $p_k$ in $G^*$ with endvertex $v$. This is a contradiction because $G[f(H)]\cup P\subseteq G$ contains a copy of $T$. Hence $G_v$ is $\X_1$-free. $G_v$ is also $\X_2$-free by Lemma \ref{decomposition} (5) and $\X_3$-free since $v$ is the only vertex of colour 1 in $G_v$. Lastly, we have $d_G(v)<k$ by Lemma \ref{decomposition} (3) and therefore
$$d_{G_v}(v)=d_G(v)-d_{G^*}(v)\leq k-d_{G^*}(v)-1=:n'(v),$$
which shows that $G_v$ is $\X_4^{n'(v)}$-free. Therefore, there is a 2-embedding of $G_v$ in $\Gamma^{n'(v)}$, and since $G_v$ is connected, there is an integer $c'(v)\in\N$ such that the image of this 2-embedding is contained in $\Gamma^{n'(v)}_{c'(v)}$.

Next, define an $\omega$-colouring of $G^*$ by giving each vertex $v\in V(G^*)$ the colour $c'(v)$. Since $G^*$ is $S_k$-free by Lemma \ref{decomposition} (3), Lemma \ref{sk-free} implies that there is a topological $\omega$-embedding $\gamma^*$ of $G^*$ in $\Gamma^*$ with $d_{G^*}(v)=d_{\Gamma^*}(\gamma(v))$ and hence $n'(v)=n(\gamma^*(v))$ for all $v\in V(G^*)$. For every vertex $v\in V(G^*)$, we have $c'(v)=c(\gamma^*(v))$ because $\gamma$ respects colouring. This means that $\Gamma_{\gamma^*(v)}$ is a copy of $\Gamma_{c'(v)}^{n'(v)}$.
Therefore, there exists a 2-embedding $\gamma_v$ of $G_v$ in $\Gamma_v$ and we have $\gamma_v(v)=\gamma^*(v)$ since $\gamma_v$ respects colouring. Thus
$$\gamma^*\cup\bigcup_{v\in V(G^*)}\gamma_v$$
is a topological embedding of $G$ in $\Gamma$.
\end{proof}

\section{Forbidden Subdivided Stars -- Negative Results}\label{section-neg}

In this section, we prove that there is no $\unlhd$-universal $T$-free graph if $T$ is a subdivided star in which at least 3 of the original edges are subdivided.
The proof relies on ideas from Cherlin and Shelah in \cite[Proposition 3.3]{cherlinshelah}.

\begin{theorem}\label{negative}
Let $k\in\N$ and $p_1\leq\ldots\leq p_k\in\N$. If $k\geq 3$ and $p_{k-2}\geq 2$, then there exists no $\unlhd$-universal graph in $\Forb(T(p_1,\ldots,p_k);\leq)$.
\end{theorem}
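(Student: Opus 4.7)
The plan is to assume, for a contradiction, that $\Gamma\in\Forb(T;\leq)$ is $\unlhd$-universal, where $T=T(p_1,\ldots,p_k)$ with $k\geq 3$ and $p:=p_{k-2}\geq 2$, and to construct a $T$-free graph that cannot be topologically embedded in $\Gamma$. The construction will follow the diagonalisation template of Cherlin and Shelah \cite{cherlinshelah} (Proposition~3.3), adapted to the topological-minor setting.

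First I would extract the structural consequence of $T$-freeness for $\Gamma$. At every vertex $v$ of $\Gamma$ there cannot exist $k$ internally disjoint paths with distinct endpoints of lengths at least $p_1,\ldots,p_k$. In particular, among the internally disjoint paths of length at least $p$ starting at $v$ no three can have endpoints that are simultaneously compatible with the remaining $k-3$ legs of $T$. Informally, $\Gamma$ is \emph{two-dimensional} in its long-branch behaviour in a neighbourhood of each vertex, and this rigidity is what I would exploit.

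Second, by recursion on $n<\omega$ I would build a $T$-free rooted graph $G$ that diagonalises against a fixed enumeration $(x_n)_{n<\omega}$ of $V(\Gamma)$. At stage $n$, corresponding to the vertex $x_n$, I would add to the spine of $G$ a carefully chosen cluster of short pendant paths of length exactly $p-1$, arranged so as to rule out any topological embedding $\gamma\colon G\to\Gamma$ that maps the root of $G$ to $x_n$. The gadgets attached at each stage remain of length $p-1<p$, so they can never play the role of one of the three longest legs of $T$; combined with the path-like shape of the spine (only two long branches at any vertex), this keeps $G$ inside $\Forb(T;\leq)$, by the same type of analysis used in Section~\ref{section-pos}. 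The final $G$ admits no topological embedding into $\Gamma$ at all, contradicting the $\unlhd$-universality of $\Gamma$.

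The hard part will be making the diagonalisation robust against the flexibility of topological embeddings: edges of $G$ may be subdivided arbitrarily when lifted into $\Gamma$, so local combinatorial data on $G$ does not directly translate into local data on $\Gamma$. The Cherlin-Shelah device I would adapt is to use the short pendant paths of length $p-1$ as essentially rigid \emph{beacons}: the $T$-freeness of $\Gamma$ constrains the distribution of such pendants of length $\geq p-1$ in any neighbourhood of $x_n$ strongly enough that a mismatch with the beacons prescribed at stage $n$ blocks the hypothetical embedding with $\gamma(r)=x_n$. Balancing the rigidity of the beacons, the two-dimensional long-branch structure of $\Gamma$, and the $T$-freeness of $G$ itself, while passing to the countable limit $G$, is the technical heart of the proof.
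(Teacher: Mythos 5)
There is a genuine gap, and it sits exactly at the point you yourself flag as ``the technical heart'': nothing in the proposal explains how local data planted in $G$ is forced to reappear near $x_n$ in $\Gamma$. A topological embedding may subdivide every edge of $G$ arbitrarily, so your pendant paths of length $p-1$ are not ``essentially rigid beacons'' at all: their images can be stretched to any length, the spine can be stretched, and distances in $G$ impose no constraint on distances in $\Gamma$. The only leverage $T$-freeness of $\Gamma$ gives you is that the \emph{image} of $G$ may not contain a $T$-subgraph; but a low-degree spine vertex with a few length-$(p-1)$ pendants attached does not come close to producing a $T$ when subdivided, so stretching it is completely unconstrained and the stage-$n$ blocking step has no mechanism behind it. You also never say what, concretely, is chosen at stage $n$ as a function of $x_n$ and the (global, not local!) structure of $\Gamma$, nor why the limit graph stays $T$-free --- the appeal to ``the same type of analysis used in Section 4'' does not apply, since that section bounds degrees and path lengths in $T$-free graphs, not in graphs you are building ad hoc. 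So what you have is a restatement of the Cherlin--Shelah template for the subgraph relation, without the new ingredient needed to survive the passage to topological minors.

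For contrast, the paper's proof supplies precisely that missing ingredient. It builds an \emph{uncountable} family of $T$-free graphs $G_\alpha$, obtained from $(n-1)$-regular trees $U_\alpha$ by replacing edges, level by level according to a $\{1,2\}$-sequence $\alpha$, with one of two gadgets: $H_1$ (two vertices joined by infinitely many independent paths of length $p_m$) or $H_2$ (a dense graph on $p_m+2$ vertices). The crucial point (Claim 2) is that \emph{every proper subdivision of $G_\alpha$ already contains $T$}; hence $G_\alpha\unlhd\Gamma$ forces $G_\alpha\leq\Gamma$, i.e.\ the topological embedding cannot stretch anything --- this is how the flexibility you struggle with is eliminated. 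After that, a rigidity statement (Claim 3, via the relation ``joined by a copy of $H_1$ or $H_2$ in $\Gamma$'') shows that the tree structure $U_\alpha$ is recoverable inside $\Gamma$, and a pigeonhole argument over the uncountable family versus the countable $\Gamma$ produces two sequences $\alpha\neq\alpha'$ whose gadgets clash at some level, yielding a copy of $T$ in $\Gamma$. If you want to pursue a diagonalisation against an enumeration of $V(\Gamma)$ instead, you would first need an analogue of Claim 2 --- gadgets (such as infinitely many independent paths, whose presence \emph{is} preserved under taking topological minors in a $T$-free host) that convert hypothetical topological embeddings into undistorted ones --- before any beacon-style argument can get off the ground.
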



\begin{proof}
Let $T:=T(p_1,\ldots,p_k)$ and let $P^1,\ldots,P^k$ be the corresponding paths from Definition \ref{subdividedstar}. Let $m$ be minimal with $p_m>1$ and define $n:=k-m+1$. Then exactly $n$ of the paths $P^1,\ldots,P^k$ have length at least $p_m$ and the other paths have length 1.

Let $H_1$ be the graph consisting of two vertices $x_1$ and $x_2$ together with infinitely many independent $x_1$--$x_2$ paths of length $p_m$ and let $H_2$ be the graph with vertex set $\{x_1,x_2,y_1,y_2,\ldots,y_{p_m}\}$ where every two vertices are adjacent except for $x_1$ and $x_2$.

We use $H_1$ and $H_2$ to construct an uncountable family of $T$-free graphs. Consider the uncountable set $A\subseteq \{1,2\}^\N$ of all sequences that alternatingly contain one or two 1's and one 2, beginning with a 1. For all $\alpha\in A$, let $U_\alpha$ be an $(n-1)$-regular tree with root $r_\alpha$. For each edge $vw\in E(U_\alpha)$, we denote the minimum of $d_{U_\alpha}(r_\alpha,v)$ and $d_{U_\alpha}(r_\alpha,w)$ by $\ell(vw)$. For all $\alpha\in A$, we define a graph $G_\alpha$ as follows: We begin with $U_\alpha$, replace every edge $vw\in E(U_\alpha)$ with a copy of $H_{\alpha(\ell(vw))}$, and identify $x_1$ with $v$ and $x_2$ with $w$ (see Figure \ref{fig:G_alpha}). Note that for every vertex $v\in V(U_\alpha)$ there is at least one edge $e\in E(U_\alpha)$ which was replaced by a copy of $H_1$. Hence $v$ has infinite degree in $G_\alpha$.

\begin{figure}[h]
\centering
\scalebox{1.1}{
\begin{tikzpicture}
    \tikzstyle{v}=[draw,circle,fill=black,minimum size=4.5pt,inner sep=0pt]
    \tikzstyle{u}=[draw,circle,fill=black,minimum size=2.8pt,inner sep=0pt]
    \tikzstyle{fat} = [draw,line width=2pt,black!100]
    \tikzstyle{thin} = [draw,line width=0.5pt,black!60]
    
    \node[scale=1.2] at (5.5,3.55) {. . .};
    
    \node[v] (d0) at (0,3) {};
    \node[v] (d1) at (1,3) {};
    \node[v] (d2) at (2,3) {};
    \node[v] (d3) at (3,3) {};
    \node[v] (d4) at (4,3) {};
    \node[v] (d5) at (5,3) {};
    \node[v] (d6) at (6,3) {};
    \node[v] (d7) at (7,3) {};
    \node[v] (d8) at (8,3) {};
    \node[v] (d9) at (9,3) {};
    \node[v] (d10) at (10,3) {};
    \node[v] (d11) at (11,3) {};
    
    \node[v] (c0) at (0.5,2) {};
    \node[v] (c1) at (2.5,2) {};
    \node[v] (c2) at (4.5,2) {};
    \node[v] (c3) at (6.5,2) {};
    \node[v] (c4) at (8.5,2) {};
    \node[v] (c5) at (10.5,2) {};
    
    \node[v] (b0) at (1.5,1) {};
    \node[v] (b1) at (5.5,1) {};
    \node[v] (b2) at (9.5,1) {};
    
    \node[v] (a0) at (5.5,0) {};

    \node[u] (x0) at (3.4,0.1) {};
    \node[u] (x1) at (3.44,0.26) {};
    \node[u] (x2) at (3.465,0.36) {};
    \node[u] (x3) at (3.485,0.44) {};
    \node[u] (x4) at (3.495,0.48) {};
    \node[u] (x5) at (3.5,0.5) {};
    \draw[fat] (a0) -- (x0);
    \draw[thin] (a0) -- (x1);
    \draw[thin] (a0) -- (x2);
    \draw[thin] (a0) -- (x3);
    \draw[thin] (a0) -- (x4);
    \draw[thin] (a0) -- (x5);
    \draw[fat] (b0) -- (x0);
    \draw[thin] (b0) -- (x1);
    \draw[thin] (b0) -- (x2);
    \draw[thin] (b0) -- (x3);
    \draw[thin] (b0) -- (x4);
    \draw[thin] (b0) -- (x5);

    \node[u] (x6) at (4.9,0.5) {};
    \node[u] (x7) at (5.14,0.5) {};
    \node[u] (x8) at (5.29,0.5) {};
    \node[u] (x9) at (5.41,0.5) {};
    \node[u] (x10) at (5.47,0.5) {};
    \node[u] (x11) at (5.5,0.5) {};
    \draw[fat] (a0) -- (x6) {};
    \draw[thin] (a0) -- (x7) {};
    \draw[thin] (a0) -- (x8) {};
    \draw[thin] (a0) -- (x9) {};
    \draw[thin] (a0) -- (x10) {};
    \draw[thin] (a0) -- (x11) {};
    \draw[fat] (b1) -- (x6) {};
    \draw[thin] (b1) -- (x7) {};
    \draw[thin] (b1) -- (x8) {};
    \draw[thin] (b1) -- (x9) {};
    \draw[thin] (b1) -- (x10) {};
    \draw[thin] (b1) -- (x11) {};

    \node[u] (x12) at (7.6,0.1) {};
    \node[u] (x13) at (11 - 3.44,0.26) {};
    \node[u] (x14) at (11 - 3.465,0.36) {};
    \node[u] (x15) at (11 - 3.485,0.44) {};
    \node[u] (x16) at (11 - 3.495,0.48) {};
    \node[u] (x17) at (11 - 3.5,0.5) {};
    \draw[fat] (a0) -- (x12);
    \draw[thin] (a0) -- (x13);
    \draw[thin] (a0) -- (x14);
    \draw[thin] (a0) -- (x15);
    \draw[thin] (a0) -- (x16);
    \draw[thin] (a0) -- (x17);
    \draw[fat] (b2) -- (x12);
    \draw[thin] (b2) -- (x13);
    \draw[thin] (b2) -- (x14);
    \draw[thin] (b2) -- (x15);
    \draw[thin] (b2) -- (x16);
    \draw[thin] (b2) -- (x17);
    
    \node[u] (y0) at (0.6,1.1) {};
    \node[u] (y1) at (0.76,1.26) {};
    \node[u] (y2) at (0.86,1.36) {};
    \node[u] (y3) at (0.94,1.44) {};
    \node[u] (y4) at (0.98,1.48) {};
    \node[u] (y5) at (1,1.5) {};
    \draw[fat] (b0) -- (y0);
    \draw[thin] (b0) -- (y1);
    \draw[thin] (b0) -- (y2);
    \draw[thin] (b0) -- (y3);
    \draw[thin] (b0) -- (y4);
    \draw[thin] (b0) -- (y5);
    \draw[fat] (c0) -- (y0);
    \draw[thin] (c0) -- (y1);
    \draw[thin] (c0) -- (y2);
    \draw[thin] (c0) -- (y3);
    \draw[thin] (c0) -- (y4);
    \draw[thin] (c0) -- (y5);

    \node[u] (y6) at (2.4,1.1) {};
    \node[u] (y7) at (2.24,1.26) {};
    \node[u] (y8) at (2.14,1.36) {};
    \node[u] (y9) at (2.06,1.44) {};
    \node[u] (y10) at (2.02,1.48) {};
    \node[u] (y11) at (2,1.5) {};
    \draw[fat] (b0) -- (y6);
    \draw[thin] (b0) -- (y7);
    \draw[thin] (b0) -- (y8);
    \draw[thin] (b0) -- (y9);
    \draw[thin] (b0) -- (y10);
    \draw[thin] (b0) -- (y11);
    \draw[fat] (c1) -- (y6);
    \draw[thin] (c1) -- (y7);
    \draw[thin] (c1) -- (y8);
    \draw[thin] (c1) -- (y9);
    \draw[thin] (c1) -- (y10);
    \draw[thin] (c1) -- (y11);

    \node[u] (y12) at (4 + 0.6,1.1) {};
    \node[u] (y13) at (4 + 0.76,1.26) {};
    \node[u] (y14) at (4 + 0.86,1.36) {};
    \node[u] (y15) at (4 + 0.94,1.44) {};
    \node[u] (y16) at (4 + 0.98,1.48) {};
    \node[u] (y17) at (4 + 1,1.5) {};
    \draw[fat] (b1) -- (y12);
    \draw[thin] (b1) -- (y13);
    \draw[thin] (b1) -- (y14);
    \draw[thin] (b1) -- (y15);
    \draw[thin] (b1) -- (y16);
    \draw[thin] (b1) -- (y17);
    \draw[fat] (c2) -- (y12);
    \draw[thin] (c2) -- (y13);
    \draw[thin] (c2) -- (y14);
    \draw[thin] (c2) -- (y15);
    \draw[thin] (c2) -- (y16);
    \draw[thin] (c2) -- (y17);

    \node[u] (y18) at (4 + 2.4,1.1) {};
    \node[u] (y19) at (4 + 2.24,1.26) {};
    \node[u] (y20) at (4 + 2.14,1.36) {};
    \node[u] (y21) at (4 + 2.06,1.44) {};
    \node[u] (y22) at (4 + 2.02,1.48) {};
    \node[u] (y23) at (4 + 2,1.5) {};
    \draw[fat] (b1) -- (y18);
    \draw[thin] (b1) -- (y19);
    \draw[thin] (b1) -- (y20);
    \draw[thin] (b1) -- (y21);
    \draw[thin] (b1) -- (y22);
    \draw[thin] (b1) -- (y23);
    \draw[fat] (c3) -- (y18);
    \draw[thin] (c3) -- (y19);
    \draw[thin] (c3) -- (y20);
    \draw[thin] (c3) -- (y21);
    \draw[thin] (c3) -- (y22);
    \draw[thin] (c3) -- (y23);

    \node[u] (y24) at (11 - 2.4,1.1) {};
    \node[u] (y25) at (11 - 2.24,1.26) {};
    \node[u] (y26) at (11 - 2.14,1.36) {};
    \node[u] (y27) at (11 - 2.06,1.44) {};
    \node[u] (y28) at (11 - 2.02,1.48) {};
    \node[u] (y29) at (11 - 2,1.5) {};
    \draw[fat] (b2) -- (y24);
    \draw[thin] (b2) -- (y25);
    \draw[thin] (b2) -- (y26);
    \draw[thin] (b2) -- (y27);
    \draw[thin] (b2) -- (y28);
    \draw[thin] (b2) -- (y29);
    \draw[fat] (c4) -- (y24);
    \draw[thin] (c4) -- (y25);
    \draw[thin] (c4) -- (y26);
    \draw[thin] (c4) -- (y27);
    \draw[thin] (c4) -- (y28);
    \draw[thin] (c4) -- (y29);

    \node[u] (y30) at (11 - 0.6,1.1) {};
    \node[u] (y31) at (11 - 0.76,1.26) {};
    \node[u] (y32) at (11 - 0.86,1.36) {};
    \node[u] (y33) at (11 - 0.94,1.44) {};
    \node[u] (y34) at (11 - 0.98,1.48) {};
    \node[u] (y35) at (11 - 1,1.5) {};
    \draw[fat] (b2) -- (y30);
    \draw[thin] (b2) -- (y31);
    \draw[thin] (b2) -- (y32);
    \draw[thin] (b2) -- (y33);
    \draw[thin] (b2) -- (y34);
    \draw[thin] (b2) -- (y35);
    \draw[fat] (c5) -- (y30);
    \draw[thin] (c5) -- (y31);
    \draw[thin] (c5) -- (y32);
    \draw[thin] (c5) -- (y33);
    \draw[thin] (c5) -- (y34);
    \draw[thin] (c5) -- (y35);
    
    \node[u] (z0) at (0.13,2.44) {};
    \node[u] (z1) at (0.37,2.56) {};
    \draw[thin] (z0) -- (z1);
    \draw[fat] (c0) -- (z0);
    \draw[thin] (c0) -- (z1);
    \draw[fat] (d0) -- (z0);
    \draw[thin] (d0) -- (z1);

    \node[u] (z2) at (0.87,2.44) {};
    \node[u] (z3) at (0.63,2.56) {};
    \draw[thin] (z2) -- (z3);
    \draw[fat] (c0) -- (z2);
    \draw[thin] (c0) -- (z3);
    \draw[fat] (d1) -- (z2);
    \draw[thin] (d1) -- (z3);

    \node[u] (z4) at (2 + 0.13,2.44) {};
    \node[u] (z5) at (2 + 0.37,2.56) {};
    \draw[thin] (z4) -- (z5);
    \draw[fat] (c1) -- (z4);
    \draw[thin] (c1) -- (z5);
    \draw[fat] (d2) -- (z4);
    \draw[thin] (d2) -- (z5);

    \node[u] (z6) at (2 + 0.87,2.44) {};
    \node[u] (z7) at (2 + 0.63,2.56) {};
    \draw[thin] (z6) -- (z7);
    \draw[fat] (c1) -- (z6);
    \draw[thin] (c1) -- (z7);
    \draw[fat] (d3) -- (z6);
    \draw[thin] (d3) -- (z7);

    \node[u] (z8) at (4 + 0.13,2.44) {};
    \node[u] (z9) at (4+ 0.37,2.56) {};
    \draw[thin] (z8) -- (z9);
    \draw[fat] (c2) -- (z8);
    \draw[thin] (c2) -- (z9);
    \draw[fat] (d4) -- (z8);
    \draw[thin] (d4) -- (z9);

    \node[u] (z10) at (4 + 0.87,2.44) {};
    \node[u] (z11) at (4 + 0.63,2.56) {};
    \draw[thin] (z10) -- (z11);
    \draw[fat] (c2) -- (z10);
    \draw[thin] (c2) -- (z11);
    \draw[fat] (d5) -- (z10);
    \draw[thin] (d5) -- (z11);

    \node[u] (z12) at (7 - 0.87,2.44) {};
    \node[u] (z13) at (7 - 0.63,2.56) {};
    \draw[thin] (z12) -- (z13);
    \draw[fat] (c3) -- (z12);
    \draw[thin] (c3) -- (z13);
    \draw[fat] (d6) -- (z12);
    \draw[thin] (d6) -- (z13);

    \node[u] (z14) at (7 - 0.13,2.44) {};
    \node[u] (z15) at (7 - 0.37,2.56) {};
    \draw[thin] (z14) -- (z15);
    \draw[fat] (c3) -- (z14);
    \draw[thin] (c3) -- (z15);
    \draw[fat] (d7) -- (z14);
    \draw[thin] (d7) -- (z15);

    \node[u] (z16) at (9 - 0.87,2.44) {};
    \node[u] (z17) at (9 - 0.63,2.56) {};
    \draw[thin] (z16) -- (z17);
    \draw[fat] (c4) -- (z16);
    \draw[thin] (c4) -- (z17);
    \draw[fat] (d8) -- (z16);
    \draw[thin] (d8) -- (z17);

    \node[u] (z18) at (9 - 0.13,2.44) {};
    \node[u] (z19) at (9 - 0.37,2.56) {};
    \draw[thin] (z18) -- (z19);
    \draw[fat] (c4) -- (z18);
    \draw[thin] (c4) -- (z19);
    \draw[fat] (d9) -- (z18);
    \draw[thin] (d9) -- (z19);

    \node[u] (z20) at (11 - 0.87,2.44) {};
    \node[u] (z21) at (11 - 0.63,2.56) {};
    \draw[thin] (z20) -- (z21);
    \draw[fat] (c5) -- (z20);
    \draw[thin] (c5) -- (z21);
    \draw[fat] (d10) -- (z20);
    \draw[thin] (d10) -- (z21);

    \node[u] (z22) at (11 - 0.13,2.44) {};
    \node[u] (z23) at (11 - 0.37,2.56) {};
    \draw[thin] (z22) -- (z23);
    \draw[fat] (c5) -- (z22);
    \draw[thin] (c5) -- (z23);
    \draw[fat] (d11) -- (z22);
    \draw[thin] (d11) -- (z23);
    
\end{tikzpicture}
}
\caption{The graph $G_\alpha$ when $p_m = 2$, $n = 4$ and $\alpha = (1, 1, 2, \dots)$. The fat edges form a subdivision of $U_\alpha$ in $G_\alpha$.}
\label{fig:G_alpha}
\end{figure}
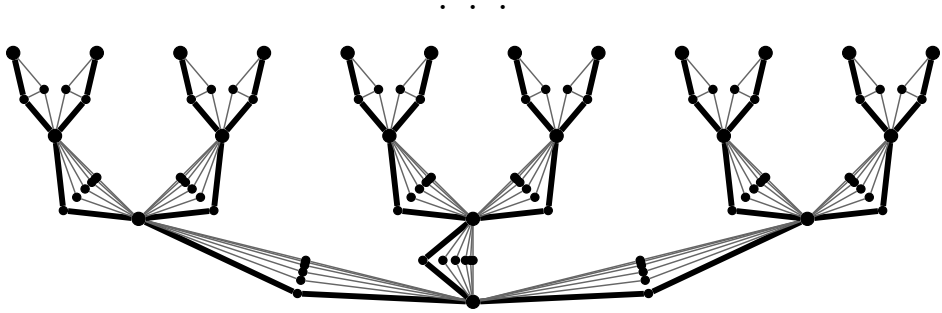

We now prove two crucial properties of $G_\alpha$ for all $\alpha\in A$:

\smallskip
\noindent\textit{Claim 1:} $G_\alpha$ is $T$-free.
\smallskip

\noindent\textit{Proof of Claim 1:}
Suppose that there is a copy of $T$ in $G_\alpha$ and let $v$ be its centre. It is impossible that $v\in V(G_\alpha)\setminus V(U_\alpha)$: If $v$ is contained in a copy of $H_1$, then the degree of $v$ in $G_\alpha$ is 2 and therefore $v$ cannot be the centre of a copy of $T$. Now suppose that $v$ is contained in a copy of $H_2$. The copy of $T$ contains 3 paths of length at least $p_m$ starting in $v$. Each of them has at least $p_m+1$ vertices and must therefore use one of the vertices corresponding to $x_1$ or $x_2$ in the copy of $H_2$ which is a contradiction. Therefore, $v$ must be contained in $V(U_\alpha)$. However, $v$ is only contained in $n-1$ copies of $H_1$ or $H_2$ and each of them can only contain inner vertices of at most one of the paths $P^i$ for $i\geq m$, a contradiction.
\hfill $\Box$ \textit{(Claim 1)}

\smallskip
\noindent\textit{Claim 2:} Every subdivision $G'_\alpha$ of $G_\alpha$ with at least one subdividing vertex $v$ contains a subgraph isomorphic to $T$.
\smallskip

\noindent\textit{Proof of Claim 2:}
Let $H$ be the subdivision of a copy of $H_i$ for $i=1$ or $i=2$ in $G'_\alpha$ which contains $v$ and let $\gamma$ be a topological embedding of $H_i$ in $H$. Then we can find an embedding of $T$ in $G'_\alpha$ as follows. We map the centre of $T$ to $\gamma(x_1)$, and $P^m$ to a path of length $p_m$ in $H$ containing $v$ but not $\gamma(x_2)$, and $P^i$ for $i>m$ to paths each containing one neighbour of $\gamma(x_1)$ in $U_\alpha$. Since $\gamma(x_1)$ is contained in $V(U_\alpha)$, it has infinite degree in $G_\alpha$ and thus also in $G'_\alpha$. Therefore it is easy to embed the paths $P^i$ for $i<m$ in $G'_\alpha$.
\hfill $\Box$ \textit{(Claim 2)}

Suppose for a contradiction that $\Gamma$ is a $\unlhd$-universal graph in $\Forb(T;\leq)$.
For all $\alpha\in A$, we have $G_\alpha\unlhd \Gamma$ by Claim 1 and therefore $G_\alpha\leq \Gamma$ by Claim 2.
We will show that this is impossible.

We define a symmetric binary relation $R$ on the vertices of $\Gamma$ such that $R(v,w)$ if and only if $\Gamma$ contains a subgraph isomorphic to $H_1$ or $H_2$ where $v$ and $w$ play the roles of $x_1$ and $x_2$, respectively.

\smallskip
\noindent\textit{Claim 3:} Let $\alpha\in A$ and suppose that $G_\alpha\subseteq\Gamma$. If $u\in V(G_\alpha)$ is a vertex that is contained in $V(U_\alpha)$ and $v\in V(\Gamma)$ is any vertex, then $R(u,v)$ implies that also $v\in V(U_\alpha)$ and that $u$ and $v$ are adjacent in $U_\alpha$.
\smallskip

\noindent\textit{Proof of Claim 3:}
Let $\alpha\in A$ be given and write $G:=G_\alpha$ and $U:=U_\alpha$. First, note that $G$ contains a subdivision $U'$ of $U$ (see Figure \ref{fig:G_alpha}). We claim that the following is true:
\begin{itemize}
\item[(1)] If $a\neq b\in V(U)$ are adjacent in $\Gamma$, then $a$ and $b$ are also adjacent in $U$ and we have $\alpha(\ell(a,b))=1$ (which means that there is a copy of $H_1$ in $G$ between $a$ and $b$).
\item[(2)] If $a\neq b\in V(U)$, $c\in V(\Gamma)\setminus V(U)$, and $ac,cb\in E(\Gamma)$, then $a$ and $b$ are adjacent in $U$.
\end{itemize}

For the proof of (1), first suppose that $a$ and $b$ are not adjacent in $U$. Then $G+ab\subseteq\Gamma$ contains a copy of $T$ with centre $a$ which is a contradiction: Let $a'$ be the neighbour of $a$ in $U$ which lies on the $a$--$b$ path in $U$. We embed $P^m$ in the copy of $H_1$ or $H_2$ which we inserted for the edge $aa'$ in $G$ and $P^i$ for $i>m$ in $U'+ab$. It is easy to embed $P^i$ for $i<m$ since $a$ has infinite degree in $G$. Thus $a$ and $b$ are adjacent in $U$. Now suppose that $\alpha(\ell(a,b))=2$ and let $\gamma$ be a topological embedding of $H_2$ in $G$ with $\gamma(x_1)=a$ and $\gamma(x_2)=b$. However, then we can find a copy of $T$ in $\Gamma$ with centre $a$ by embedding $P^m$ in the path $a\gamma(y_1)\dots\gamma(y_{p_m})$ and $P^i$ for $i>m$ into $U'+ab$. Again it is easy to embed $P^i$ for $i<m$. The proof of (2) is similar, we only have to choose $U'$ so that $c\notin V(U')$.

For the proof of Claim 3, we begin with the case that $R(u,v)$ witnessed by a subgraph of $\Gamma$ isomorphic to $H_1$ such that $u$ and $v$ play the roles of $x_1$ and $x_2$ in $H_1$, respectively. Then $v$ must lie in $V(U')$ because otherwise we can find a copy of $T$ in $\Gamma$ with centre $u$: We embed all paths $P^i$ with $i>m$ in $U'$ and $P^m$ in one of the infinitely many $u$--$v$ paths which avoids $P^i-u$ for all $i>m$. Therefore, we can assume that $v\in V(U')$ and that we cannot find a different subdivision $U''$ of $U$ in $G$ with $v\notin V(U'')$. Hence $v\in V(U)$. Suppose for a contradiction that $u$ and $v$ are not adjacent in $U$ and choose $U'$ so that $d_{U'}(u,v)>p_m$. However, then we can find a copy of $T$ in $\Gamma$ with centre $u$ by embedding $P^i$ for $m\leq i<k$ in $U'-v$ and $P^k$ in the union of $U'$ with a $u$--$v$ path in $\Gamma$ avoiding $P^i-u$ for all $m\leq i<k$.

Otherwise we have $R(u,v)$ because $H_2\leq\Gamma$, witnessed by an embedding $f$ of $H_2$ in $\Gamma$ such that $f(x_1)=u$ and $f(x_2)=v$. Additionally, we may assume that there is no such embedding of $H_1$ in $\Gamma$. We begin by showing that we can assume that $V(U)$ contains none of the vertices $f(y_1),\ldots,f(y_{p_m-1})$. Suppose that there are $i\neq j$ with $f(y_i),f(y_j)\in V(U)$. Then $uf(y_i),uf(y_j),f(y_i)f(y_j)\in E(U)$ by (1), which is impossible because $U$ is a tree. Therefore at most one of the vertices $f(y_1),\ldots,f(y_{p_m})$ can lie in $V(U)$, without loss of generality $f(y_1),\ldots,f(y_{p_m-1})\notin V(U)$.

If $v\in V(U)$ we can show that $u$ and $v$ are adjacent in $U$, as required in Claim 3: Since $f(y_1)\notin V(U)$ and the edges $uf(y_1)$ and $f(y_1)v$ are contained in $E(\Gamma)$, it follows from (2) that $u$ and $v$ are adjacent in $U$. 
It is left to show that $v\notin V(U)$ is impossible.

Finally, we demonstrate that $v,f(y_1),\ldots,f(y_{p_m-1})\notin V(U)$ leads to a contradiction. We begin by showing that we can always choose $U'$ so that it contains none of the vertices $v,f(y_1),\ldots,f(y_{p_m-1})$. There is only one case in which this is impossible: We must have $H_2\leq G$, witnessed by an embedding $g$ of $H_2$ in $G$ such that $\{v,f(y_1),\ldots,f(y_{p_m-1})\}=\{g(y_1),\ldots,g(y_{p_m})\}$. First we notice that $f(y_{p_m})\in V(U)$, as otherwise we can find a copy of $T$ in $\Gamma$ with centre $g(x_1)$ by embedding $P^m$ in the path $g(x_1)g(y_2)g(y_3)\ldots g(y_{p_m})f(y_{p_m})$ and $P^i$ for $i>m$ into a subdivision of $U$ in $G$ containing $g(y_1)$. Furthermore, since $uf(y_{p_m})\in E(\Gamma)$, (1) implies that there is a vertex $w\in\{u,f(y_{p_m})\}$ which is not an element of $\{g(x_1),g(x_2)\}$. The edges $wf(y_1)$ and $f(y_1)g(x_i)$ are contained in $E(\Gamma)$ for $i=1,2$ and thus (2) implies that $w$ and $g(x_i)$ are adjacent in $U$. However, it follows that $g(x_1)$, $g(x_2)$ and $w$ induce a triangle in the tree $U$. Hence we can assume that $U'$ does not contain $v,f(y_1),\ldots,f(y_{p_m-1})$. But now we can find a copy of $T$ with centre $u$ in the union of $U'$ with the path $uf(y_1)\ldots f(y_{p_m-1})v$ in $\Gamma$, a contradiction.
\hfill $\Box$ \textit{(Claim 3)}

For all $\alpha\in A$ let $\gamma_\alpha$ be an embedding of $G_\alpha$ in $\Gamma$ and define $D_\alpha^i:=\{v\in V(U_\alpha):d_{U_\alpha}(r_\alpha,v)=i\}$ for all $i\in\N$. Since $A$ is uncountable but $\Gamma$ is countable, there exist $\alpha\neq\alpha'\in A$ such that $\gamma_\alpha(r_\alpha)=\gamma_{\alpha'}(r_{\alpha'})$. Without loss of generality, we assume that $G_\alpha,G_{\alpha'}\subseteq \Gamma$ (and not just $G_\alpha,G_{\alpha'}\leq \Gamma$). By Claim 3, we have $D_\alpha^i=D_{\alpha'}^i$ for all $i\in\N$. Since $\alpha\neq\alpha'$, it follows that there are embeddings $f:H_1\to G_\alpha\subseteq\Gamma$ and $g:H_2\to G_{\alpha'}\subseteq\Gamma$ such that $f(x_1)=g(x_1)$ and $f(x_2)=g(x_2)$ (we might have to swap the roles of $\alpha$ and $\alpha'$ for that).

The vertices $g(y_1),\ldots,g(y_{p_m})$ are not contained in any copy of $H_2$ in $G_\alpha$: Suppose for a contradiction that for example $g(y_1)$ is contained in a copy $H'_2$ of $H_2$ in $G_\alpha$. Note that $g(y_1),\ldots,g(y_{p_m})\notin V(U_{\alpha'})=V(U_\alpha)$. Furthermore, $g(y_1)$ is adjacent to $g(x_1)=f(x_1)$ and $g(x_2)=f(x_2)$ and to vertices $y,z$ that play the role of $x_1,x_2$ in $H'_2$. Hence $\{f(x_1),f(x_2)\}\neq \{y,z\}$ and therefore $g(y_1)$ is adjacent to at least three distinct vertices from $V(U_\alpha)$. Then by (2) we can find a triangle in $U_\alpha$, a contradiction.

Let $U'\subseteq G_\alpha$ be a subdivision of $U_\alpha$. Since $g(y_1),\ldots,g(y_{p_m})$ are not contained in any copy of $H_2$ in $G_\alpha$ and $g(y_1),\ldots,g(y_{p_m})\notin V(U_\alpha)$, we can choose $U'$ so that it contains none of the vertices $g(y_1),\ldots,g(y_{p_m})$. However, the union of $U'$ with the path $g(x_1)g(y_1)\ldots g(y_{p_m})$ contains a copy of $T$. Thus a $\unlhd$-universal graph $\Gamma$ in $\Forb(T;\leq)$ cannot exist.
\end{proof}

\begin{proof}[Proof of Theorem \ref{forb-star}]
We combine Theorem \ref{positive} and Theorem \ref{negative}.
\end{proof}

\medskip

\bibliographystyle{amsplain}
\bibliography{bibl}

\end{document}